 \newtheorem{thm}{Theorem}[section]
 \newtheorem{cor}[thm]{Corollary}
 \newtheorem{lem}[thm]{Lemma}
 \newtheorem{defn}[thm]{Definition}
 \newtheorem{rem}[thm]{Remark}
 \newtheorem{ex}{Example}
  \newtheorem{pr}{Problem}
\begin{document}

\title{Sparse System Identification by Low-Rank Approximation}

\author{Fredy Vides} 

\address{Scientific Computing Innovation Center, School of Mathematics and Computer Science, Universidad Nacional Aut\'onoma de Honduras, Tegucigalpa, Honduras}

\email{fredy.vides@unah.edu.hn}

\date{\today}

\begin{abstract}
In this document, some general results in approximation theory and matrix analysis with applications to sparse identification of time series models and nonlinear discrete-time dynamical systems are presented. The aforementioned theoretical methods are translated into algorithms that can be used for sparse model identification of discrete-time dynamical systems, based on structured data measured from the systems. The approximation of the state-transition operators that are determined primarily by matrices of parameters to be identified based on data measured from a given system, is approached by identifying conditions for the existence of low-rank approximations of submatrices of the trajectory matrices corresponding to the measured data, that can be used to compute approximate sparse representations of the matrices of parameters. One of the main advantages of the low-rank approximation approach presented in this document, concerns the parameter estimation for linear and nonlinear models where numerical or measurement noise could affect the estimates significantly. Prototypical algorithms based on the aforementioned techniques together with some applications to approximate identification and predictive simulation of time series models with symmetries and nonlinear structured dynamical systems in theoretical physics, fluid dynamics and weather forecasting are presented.
\end{abstract}




\keywords{System identification, low-rank approximation, time series, equivariant system.}

\subjclass[2010]{Primary 93B28, 47N70; Secondary  93C57, 93B40.}

\maketitle

\section{Introduction}

In this document, some structured matrix approximation problems that arise in the fields of system identification and model order reduction of large-scale structured dynamical systems are studied. 

The main purpose of this document is to present some theoretical and computational techniques that have been developed for the approximate structure preserving sparse identification of discrete-time dynamical systems based on structured data measured from the systems. In particular, we explore the idea of using low-rank approximations of submatrices of the Hankel-type trajectory matrices corresponding to the data samples, for the computation of the approximate sparse representations of the matrices of parameters to be identified as part of the model identification processes considered in this document. 

As part of the process previously described, some general results in approximation theory and matrix analysis with applications to sparse identification of time series models and nonlinear dynamical systems are obtained. The approximation of the corresponding state-transition operators determined by matrices of parameters to be identified, is approached by identifying conditions for the existence of easily computable integers that can be applied to estimate the computability of approximate sparse representations of the matrices of parameters, and as a by-product of the computation of these numbers one obtains low-rank approximations of submatrices of the trajectory matrices corresponding to some data measured from the system under study, that can be used to compute the sparse approximants of the matrices of parameters.

One of the main advantages of the low-rank approximation approach presented in this document, concerns the parameter estimation for linear and nonlinear models where numerical or measurement noise could affect the estimates significantly. Another advantage is the reduction of arithmetic complexities obtained as a consequence of the application of low-rank approximation techniques, as studied by Chen, Avron and Sindhwani in \cite{JMLR:v18:15-376} in the context of scalable nonparametric learning. The low-rank approximation approach implemented in this study makes sparse linear least squares solver algorithms like algorithm \ref{alg:main_SLMESolver_alg_1}, suitable for parallelization.
 
The identification and predictive numerical simulation of the evolution laws for discrete-time systems are highly important in predictive data analytics, for models related to the automatic control of systems and processes in science and engineering in the sense of \cite{BROCKETT20081,finite_quantum_control_systems,DMD_Kutz}. Part of the motivation for the development of the techniques presented in this paper came from matrix approximation problems that arise in the fields of system identification in the sense of \cite{finite_state_machine_approximation,APSSA,ChaosEqKoopman}, and the computation of digital twins as considered in \cite{DTwin1,DTwin2}. 

The study reported in this document was inspired by the theoretical and computational questions and results presented by Salova, Emenheiser, Rupe, Crutchfield, and D’Souza in \cite{ChaosEqKoopman}, by Boutsidis and Magdon-Ismail in \cite{SpLSRegression}, by Finzi, Stanton, Izmailov and Wilson in \cite{CNNEquivariance}, by Brockett and Willsky in \cite{finite_state_systems}, by Moskvina and Schmidt in \cite{APSSA}, by Shmid in \cite{DMD_Schmid}, by Proctor, Brunton and Kutz in \cite{DMD_Kutz}, by Kaiser, Kutz and Brunton in \cite{Sparse_ID_Kutz}, by Kaheman, Kutz and Brunton in \cite{SindyPIPredictive}, by Freedman and Press in \cite{FREEDMAN19961}, by Franke and Selgrade in \cite{FRANKE200364}, by Farhood and Dullerud in \cite{FARHOOD2002417}, by Schaeffer, Tran, Ward and Zhang in \cite{StructuredDouglasRachfordID}, and by Loring and Vides in \cite{SymmLogs}. 

Among the previous references, from a computational perspective, two key sources of inspiration for the work reported in this document were the amazing computational implementations of {\bf SINDy} and Douglas-Rachford algorithms for sparse nonlinear system identification along the lines of \cite{BruntonSINDy}, \cite{SindyPIPredictive} and \cite{StructuredDouglasRachfordID}. One of the objectives of the work reported in this article is to investigate the effect that the use of low-rank matrix approximation techniques to preprocess the data used as part of the model identification process would have on the performance of sparse model identification programs built over the basis of ideas used by Brunton, Kaheman, Kutz and Proctor in \cite{BruntonSINDy} and \cite{SindyPIPredictive}.

The main contribution of this article is the use of low-rank matrix approximation techniques to produce fast and easy to use sparse linear least squares solver algorithms, that can be effectively applied to sparse model identification processes. The low-rank approximation techniques that have been implemented provide a way to control the predictive model sensitivity to noise in the training data. As part of this research project, several general purpose computational tools for sparse model identification in science and engineering have been developed. 

The constructive nature of the results presented in the sections \S\ref{section:linear-solvers} and \S\ref{section_SDSI} of this document allows one to derive prototypical algorithms like the ones presented in \S\ref{section_algorithms}. Some numerical implementations of this prototypical algorithms based on Matlab, Pyhton, Julia and Netgen/NGSolve are presented in \S\ref{section_numerics}.

\section{Preliminaries and Notation}
\label{notation}

In this study, every time we refer to a system we will be considering a discrete-time dynamical system that can be described as a pair $(\Sigma,\mathscr{T})$ determined by a set of {\em states} $\Sigma\subset \mathbb{C}^n$, and a function $\mathscr{T}:\Sigma\to \Sigma$ that will called a transition operator, such that for any time series determined by a sequence $\{x_t\}_{t\geq 1}\subset \Sigma$, we will have that $\mathscr{T}(x_t)=x_{t+1}$. For systems whose state spaces are contained in $\mathbb{R}^n$ we will consider the usual identification of $\mathbb{R}$ with the real line in $\mathbb{C}$, and will apply the system identification techniques presented in this study, considering suitable restrictions when necessary.

We will write $\mathbb{Z}^+$ to denote the set of positive integers $\mathbb{Z}\cap [1,\infty)$. 

Given a set $S$, we will write $\sharp(S)$ to denote the number of elements in $S$.

In this document the symbol $\mathbb{C}^{n\times m}$ will denote the algebra of $n\times m$ complex matrices, and we will write $I_n$ to denote the identity matrix in $\mathbb{C}^{n\times n}$ and $\mathbf{0}_{n,m}$ to denote the zero matrix in $\mathbb{C}^{n\times m}$, when $m=n$ we will write $\mathbf{0}_n$ instead of $\mathbf{0}_{n,n}$. From here on, given a matrix $X=[X_{ij}]\in \mathbb{C}^{m\times n}$, we will write $X^\ast$ to denote the conjugate transpose of $X$ determined by $X^\ast=\overline{X^\top}=[\overline{X}_{ji}]$ in $\mathbb{C}^{n\times m}$. We will represent vectors in $\mathbb{C}^n$ as column matrices in $\mathbb{C}^{n\times 1}$ and as $n$-tuples.

Given any matrix $A\in \mathbb{C}^{m\times n}$ we will write $\mathrm{rk}(A)$ to denote the rank of $A$, that corresponds to the maximal number of linearly independent columns of $A$.

Given $x\in\mathbb{C}^n$ we will write $\|x\|$ to denote the $\ell_2$-norm in $\mathbb{C}^n$ determined by $\|x\|=\sqrt{x^\ast x}=(\sum_{j=1}^n |x_j|^2)^{1/2}$, and we will write $\|x\|_\infty$ to denote the $\ell_\infty$-norm in $\mathbb{C}^n$ determined by the expression $\|x\|_\infty=\max_{1\leq j\leq n}|x_j|$ for each $x\in \mathbb{C}^n$. 

In this document we will write $\hat{e}_{j,n}$ to denote the matrices in $\mathbb{C}^{n\times 1}$ representing the canonical basis of $\mathbb{C}^{n}$ (each $\hat{e}_{j,n}$ is the $j$-column of $I_n$), that are determined by the expressions
\begin{equation}
\hat{e}_{j,n}=
\begin{bmatrix}
\delta_{1,j} & \delta_{2,j} & \cdots & \delta_{n-1,j}  & \delta_{n,j}
\end{bmatrix}^\top
\label{ej_def}
\end{equation}
for each $1\leq j\leq n$, where $\delta_{k,j}$ is the Kronecker delta determined by the expression.
\begin{equation}
\delta_{k,j}=
\left\{
\begin{array}{l}
1, \:\: k=j\\
0, \:\: k\neq j
\end{array}
\right.
\label{delta_def}
\end{equation}

A matrix $P\in \mathbb{C}^{n\times n}$ will be called an orthogonal projector whenever $P^2=P=P^\ast$. A matrix $Q\in \mathbb{C}^{m\times n}$ such that the matrices $QQ^\ast$ and $Q^\ast Q$ are orthogonal projectors will be called a partial isometry. We will write $\mathbb{U}(n)$ to denote the group of unitary matrices in $\mathbb{C}^{n\times n}$ defined by the expression $\mathbb{U}(n)=\{X\in \mathbb{C}^{n\times n}:X^\ast X=XX^\ast=I_n\}$.

Given $X=[x_{jk}]\in \mathbb{C}^{m\times n}$ and $Y=[y_{pq}]\in \mathbb{C}^{r\times s}$, we will write $X\otimes Y$ to denote the Kronecker product defined by the expression $X\otimes Y=[x_{jk}Y]\in \mathbb{C}^{(mr)\times (ns)}$.

Given $X\in \mathbb{C}^{n\times m}$ we will write $\|X\|_F$ to denote the Frobenuius norm of $X$ defined by
\begin{equation}
\|X\|_F=\sqrt{\mathrm{tr}(X^\ast X)}
\label{eq:frob-norm}
\end{equation}
where $\mathrm{tr}$ denotes the trace of a matrix, defined for any $A=[a_{jk}]\in \mathbb{C}^{n\times n}$ by the expression.
\[
\mathrm{tr}(A)=\sum_{j=1}^n a_{jj}
\]

Given a finite set of vectors $\Sigma_T=\{x_1,\ldots,x_T\}\subset \mathbb{C}^n$ we will write $\mathcal{H}_L(\Sigma_T)$ to denote the Hankel-type trajectory matrix in $\mathbb{C}^{nL\times (T-L+1)}$ defined by the following expression.
\[
\mathcal{H}_L(\Sigma_T)=\begin{bmatrix}
x_1 & x_2 & x_3 & \cdots & x_{T-L+1}\\
x_2 & x_3 & x_4 & \cdots & x_{T-L+2}\\
x_3 & x_4 & x_5 & \cdots & x_{T-L+3}\\
\vdots & \vdots & \vdots & \cdots & \vdots\\
x_L & x_{L+1} & x_{L+2} & \cdots & x_{T}\\
\end{bmatrix}
\]

Given $a\in \mathbb{R}$, we will write $H_a$ to denote the function $H_a:\mathbb{R} \to \mathbb{R}$ defined by the following expression.
\begin{equation}
H_a(x)=\left\{
\begin{array}{ll}
1, & x> a\\
0, & x\leq a
\end{array}
\right.
\label{eq:H_delta_def}
\end{equation}

For any additional details on the notions previously considered, the reader is kindly referred to \cite{HornBook}, \cite{bhatia97} and \cite{GoVa13}.

\section{low-rank approximation and sparse linear least squares solvers}

\label{section:linear-solvers}

In this section some low-rank approximation methods with applications to the solution of sparse linear least squares problems are presented.

\begin{defn}
Given $\delta>0$ and a matrix $A\in \mathbb{C}^{m\times n}$, we will write $\mathrm{rk}_{\delta}(A)$ to denote the nonnegative integer determined by the expression
\[
\mathrm{rk}_{\delta}(A)=\sum_{j=1}^{\min\{m,n\}} H_\delta(s_j(A)),
\]
where the numbers $s_j(A)$ represent the singular values corresponding to an economy-sized singular value decomposition of the matrix $A$.
\end{defn}

\begin{lem}\label{lem:rk-delta-inv}
We will have that $\mathrm{rk}_\delta\left(A^\top\right)=\mathrm{rk}_\delta(A)$ for each $\delta>0$ and each $A\in \mathbb{C}^{m\times n}$.
\end{lem}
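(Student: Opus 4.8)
The plan is to reduce the identity to the standard fact that transposition leaves the multiset of singular values unchanged, after which the equality of the two integers $\mathrm{rk}_\delta(A^\top)$ and $\mathrm{rk}_\delta(A)$ follows directly from the definition. First I would note that, although the unitary factors of a singular value decomposition are not unique, the sorted sequence $s_1(A)\geq \cdots \geq s_{\min\{m,n\}}(A)\geq 0$ is uniquely determined by $A$; this guarantees that $\mathrm{rk}_\delta(A)=\sum_{j=1}^{\min\{m,n\}} H_\delta(s_j(A))$ does not depend on the particular economy-sized decomposition chosen, so it suffices to exhibit one decomposition of $A^\top$ whose singular values coincide with those of $A$.

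Next, I would fix an economy-sized decomposition $A=U\Sigma V^\ast$, with $\Sigma$ the diagonal matrix carrying the singular values and $U,V$ having orthonormal columns, and transpose it. Using $(XY)^\top=Y^\top X^\top$ together with $\Sigma^\top=\Sigma$ (the entries of $\Sigma$ being real), one rewrites $A^\top=\overline{V}\,\Sigma\,(\overline{U})^\ast$. The point to check is that $\overline{U}$ and $\overline{V}$ again have orthonormal columns, which holds because $(\overline{V})^\ast\overline{V}=\overline{V^\ast V}=\overline{I}=I$ and likewise for $\overline{U}$. Hence $A^\top=\overline{V}\,\Sigma\,(\overline{U})^\ast$ is itself an economy-sized singular value decomposition of $A^\top$, and reading off the diagonal of $\Sigma$ yields $s_j(A^\top)=s_j(A)$ for every $j$. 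Equivalently, one may argue that $A^\top=\overline{A^\ast}$ and that both conjugate transposition and entrywise conjugation preserve singular values.

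Finally, since $\min\{m,n\}=\min\{n,m\}$, the sums defining $\mathrm{rk}_\delta(A^\top)$ and $\mathrm{rk}_\delta(A)$ range over the same indices and have termwise-equal summands $H_\delta(s_j(A^\top))=H_\delta(s_j(A))$, which gives the claim. I do not expect a genuine obstacle here: the only steps requiring attention are verifying that the conjugate of a matrix with orthonormal columns retains orthonormal columns, so that the transposed factorization is a legitimate SVD, and confirming that the economy-sized convention produces the same count $\min\{m,n\}$ of singular values for both $A$ and $A^\top$, so that no extra zero singular value is introduced into one sum but not the other. Both checks are routine.
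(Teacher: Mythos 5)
Your proposal is correct and follows essentially the same route as the paper: transpose an economy-sized SVD of $A$ to exhibit an economy-sized SVD of $A^\top$ with the identical singular values, then compare the two sums term by term. The only difference is that you explicitly verify the well-definedness of the singular values and the orthonormality of the conjugated factors, details the paper leaves implicit.
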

\begin{proof}
Given an economy-sized singular value decomposition 
\begin{align*}
U\begin{bmatrix}
s_1(A) & & &\\
& s_2(A) & &\\
&&\ddots&\\
&&& s_{\min\{m,n\}}(A)
\end{bmatrix}V=A
\end{align*}
we will have that
\begin{align*}
V^\top\begin{bmatrix}
s_1(A) & & &\\
& s_2(A) & &\\
&&\ddots&\\
&&& s_{\min\{m,n\}}(A)
\end{bmatrix}U^\top=A^\top
\end{align*}
is an economy-sized singular value decomposition of $A^\top$. This implies that
\begin{align*}
\mathrm{rk}_{\delta}\left(A^\top\right)=\sum_{j=1}^{\min\{m,n\}} H_\delta(s_j(A))=\mathrm{rk}_{\delta}(A)
\end{align*}
and this completes the proof.
\end{proof}

\begin{lem}\label{lem:rk-lem}
Given $\delta>0$ and $A\in \mathbb{C}^{m\times n}$ we will have that $\mathrm{rk}_\delta(A)\leq \mathrm{rk}(A)$.
\begin{proof}
We will have that $\mathrm{rk}(A)=\sum_{j=1}^{\min\{m,n\}} H_0(s_j(A))\geq \sum_{j=1}^{\min\{m,n\}} H_\delta(s_j(A))=\mathrm{rk}_\delta(A)$. This completes the proof.
\end{proof}
\end{lem}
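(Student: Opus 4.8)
The plan is to reduce the inequality to a pointwise comparison of two threshold functions evaluated at the singular values of $A$, and then sum. The key observation is that both quantities in the statement can be written in the common form $\sum_{j=1}^{\min\{m,n\}} H_a(s_j(A))$, with $a=\delta$ on the left and $a=0$ on the right, so that the entire lemma follows from monotonicity of the family $H_a$ in the threshold $a$.

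First I would recall the standard fact from matrix analysis that the rank of $A$ equals the number of its strictly positive singular values. Concretely, since $H_0(s_j(A))=1$ precisely when $s_j(A)>0$ and equals $0$ otherwise, counting the nonzero singular values gives the identity $\mathrm{rk}(A)=\sum_{j=1}^{\min\{m,n\}} H_0(s_j(A))$. This re-expression of the ordinary rank in the same notation as $\mathrm{rk}_\delta$ is the crux of the argument; once both sides are in this shape the comparison is immediate.

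Next I would establish the pointwise inequality $H_\delta(x)\leq H_0(x)$ for every $x\geq 0$ when $\delta>0$. Indeed, if $H_\delta(x)=1$ then $x>\delta>0$, so $x>0$ and hence $H_0(x)=1$; in every remaining case $H_\delta(x)=0\leq H_0(x)$. Applying this termwise to the nonnegative singular values $s_j(A)$ and summing over $j$ then yields $\mathrm{rk}_\delta(A)=\sum_{j=1}^{\min\{m,n\}} H_\delta(s_j(A))\leq \sum_{j=1}^{\min\{m,n\}} H_0(s_j(A))=\mathrm{rk}(A)$, which is the desired bound.

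I do not expect any genuine obstacle here: the only nonroutine step is the identification $\mathrm{rk}(A)=\sum_j H_0(s_j(A))$, and this is precisely the well-known characterization of rank as the count of nonzero singular values obtained from the singular value decomposition. Everything else is an elementary monotonicity comparison summed termwise, with no need to invoke the explicit factors $U$ and $V$ of the decomposition.
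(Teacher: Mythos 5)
Your proposal is correct and follows essentially the same route as the paper: both rewrite $\mathrm{rk}(A)$ as $\sum_{j} H_0(s_j(A))$ and compare termwise with $\sum_{j} H_\delta(s_j(A))$ using the monotonicity of $H_a$ in the threshold. You merely make explicit the pointwise inequality $H_\delta(x)\leq H_0(x)$ that the paper leaves implicit.
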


\begin{thm}\label{thm:main-SpLSSolver}
Given $\delta>0$ and $y,x_1,\ldots,x_m\in \mathbb{C}^n$, let 
\begin{align*}
X&=\begin{bmatrix}
| & | &  & |\\
x_1 & x_2 & \cdots & x_{m}\\
| & | &  & |
\end{bmatrix}.
\end{align*}
If $\mathrm{rk}_\delta\left(X\right)>0$ and if we set $r=\mathrm{rk}_\delta\left(X\right)$ and $s_{n,m}(r)=\sqrt{r(\min\{m,n\}-r)}$ 
then, there are a rank $r$ orthogonal projector $Q$, $r$ vectors $x_{j_1},\ldots,x_{j_r}\in \{x_1,\ldots,x_m\}$ and $r$ scalars $c_1\ldots,c_r\in \mathbb{C}$ such that $\|X-QX\|_F\leq (s_{n,m}(r)/\sqrt{r})\delta$, and $\|y-\sum_{k=1}^r c_kx_{j_k}\|\leq \left(\sum_{k=1}^r|c_k|^2\right)^{\frac{1}{2}}s_{n,m}(r)\delta+\|(I_n-Q)y\|$.
\end{thm}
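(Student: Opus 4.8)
The plan is to build the projector $Q$ directly from a singular value decomposition of $X$, and then to recover the $r$ representative columns by a rank argument on the compressed matrix $QX$. First I would fix an economy-sized SVD $X=U\Sigma V$ with singular values $s_1(X)\geq\cdots\geq s_{\min\{m,n\}}(X)$; by the definition of $\mathrm{rk}_\delta$ exactly $r$ of these exceed $\delta$, so that $s_1(X),\ldots,s_r(X)>\delta\geq s_{r+1}(X)\geq\cdots\geq s_{\min\{m,n\}}(X)$. I would let $Q$ be the orthogonal projector onto the span of the first $r$ left singular vectors $u_1,\ldots,u_r$, which is a rank $r$ orthogonal projector. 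Since $QX$ retains exactly the top $r$ dyads of the decomposition, the residual $X-QX$ carries only the tail singular values, whence
\begin{align*}
\|X-QX\|_F^2=\sum_{j=r+1}^{\min\{m,n\}}s_j(X)^2\leq (\min\{m,n\}-r)\,\delta^2.
\end{align*}
Taking square roots and noting that $s_{n,m}(r)/\sqrt{r}=\sqrt{\min\{m,n\}-r}$ yields the first inequality.

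For the second inequality I would first observe that $QX$ has rank exactly $r$: its range is contained in the $r$-dimensional range of $Q$, while the $r$ surviving dyads carry the strictly positive coefficients $s_1(X),\ldots,s_r(X)>\delta>0$, so no rank is lost. Because $QX$ has rank $r$ and possesses $m\geq r$ columns (recall $r\leq\min\{m,n\}$ by Lemma \ref{lem:rk-lem}), I can extract $r$ column indices $j_1,\ldots,j_r$ for which the projected columns $Qx_{j_1},\ldots,Qx_{j_r}$ are linearly independent, and these therefore span the whole range of $Q$. Writing $X_J=\begin{bmatrix}x_{j_1}&\cdots&x_{j_r}\end{bmatrix}$, the linear map $c\mapsto QX_Jc$ is then a bijection of $\mathbb{C}^r$ onto the range of $Q$, so I can select the unique $c=(c_1,\ldots,c_r)\in\mathbb{C}^r$ solving $QX_Jc=Qy$, which is legitimate precisely because $Qy$ lies in the range of $Q$.

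With this choice the residual splits cleanly. Using $Qy=QX_Jc$ I would write
\begin{align*}
y-\sum_{k=1}^r c_kx_{j_k}=y-X_Jc=(I_n-Q)y-(I_n-Q)X_Jc,
\end{align*}
so that $\|y-\sum_{k=1}^r c_kx_{j_k}\|\leq\|(I_n-Q)y\|+\|(I_n-Q)X_Jc\|$. Since $\|(I_n-Q)X_Jc\|\leq\|(I_n-Q)X_J\|_F\,\|c\|$, the decisive estimate is that $(I_n-Q)X_J$ is a column submatrix of $X-QX$, whence
\begin{align*}
\|(I_n-Q)X_J\|_F\leq\|X-QX\|_F\leq\sqrt{\min\{m,n\}-r}\,\delta\leq s_{n,m}(r)\,\delta,
\end{align*}
where the final step uses $r\geq 1$. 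Combining these inequalities with $\|c\|=\left(\sum_{k=1}^r|c_k|^2\right)^{1/2}$ delivers exactly the claimed bound.

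The step I expect to be most delicate is verifying that the compressed matrix $QX$ does not drop rank, since the entire column-selection argument hinges on reaching every vector in the range of $Q$ through linear combinations of the projected columns; this is where the strict inequality $s_r(X)>\delta>0$ encoded in $\mathrm{rk}_\delta$ becomes essential. I would also be careful to note that it is solving the \emph{projected} system $QX_Jc=Qy$, rather than attempting to interpolate the columns of $X$ themselves, that lets the naive choice of linearly independent columns succeed without recourse to a rank-revealing factorization; the looser constant $s_{n,m}(r)$ in the statement then follows for free from $r\geq 1$.
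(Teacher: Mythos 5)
Your proposal is correct and follows essentially the same route as the paper: the same SVD-derived rank-$r$ projector $Q$, the same tail-singular-value bound for $\|X-QX\|_F$, the same selection of $r$ linearly independent projected columns spanning the range of $Q$, and the same identity $y-X_Jc=(I_n-Q)(y-X_Jc)$. The only cosmetic difference is that you bound the residual via $\|(I_n-Q)X_J\|_F\,\|c\|$ rather than via per-column bounds combined with Cauchy--Schwarz, which yields the same constant after relaxing $\sqrt{\min\{m,n\}-r}$ to $s_{n,m}(r)$ using $r\geq 1$.
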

\begin{proof}
Let us consider an economy-sized singular value decomposition $USV=A$. If $u_j$ denotes the $j$-column of $U$, let $Q$ be the rank $r=\mathrm{rk}_\delta(A)$ orthogonal projector determined by the expression $Q=\sum_{j=1}^r u_ju_j^\ast$. It can be seen that
\begin{align*}
\|X-QX\|_F^2&=\sum_{j=r+1}^{\min\{m,n\}} s_j(X)^2\\
&\leq (\min\{m,n\}-r)\delta^2=\frac{s_{n,m}(r)^2}{r}\delta^2.
\end{align*}
Consequently, $\|X-QX\|_F\leq \frac{s_{n,m}(r)}{\sqrt{r}}\delta$.

Let us set.
\begin{align*}
\hat{X}&=
\begin{bmatrix}
| & | &  & |\\
\hat{x}_1 & \hat{x}_2 & \cdots & \hat{x}_{m}\\
| & | &  & |
\end{bmatrix}=QX\\
\hat{X}_y&=
\begin{bmatrix}
| & | &  & | & |\\
\hat{x}_1 & \hat{x}_2 & \cdots & \hat{x}_{m} & \hat{y}\\
| & | &  & | & |
\end{bmatrix}=Q\begin{bmatrix}
X & y\\
\end{bmatrix}
\end{align*}
Since by lemma \ref{lem:rk-lem} $\mathrm{rk}(X)\geq \mathrm{rk}_\delta(X)$, we will have that $\mathrm{rk}(\hat{X})=r=\mathrm{rk}_\delta(X)>0$, and since we also have that $\hat{x}_1,\ldots,\hat{x}_m,\hat{y}\in \mathrm{span}(\{u_1,\ldots,u_r\})$, there are $r$ linearly independent $\hat{x}_{j_1},\ldots,\hat{x}_{j_r}\in \{\hat{x}_1,\ldots,\hat{x}_m\}$ such that $\mathrm{span}(\{u_1,\ldots,u_r\})=\mathrm{span}(\{\hat{x}_{j_1},\ldots,\hat{x}_{j_r}\})$, this in turn implies that $\hat{y}\in \mathrm{span}(\{\hat{x}_{j_1},\ldots,\hat{x}_{j_r}\})$ and there are $c_1,\ldots,c_r\in \mathbb{C}$ such that $\hat{y}=\sum_{k=1}^r c_k\hat{x}_{j_k}$. It can be seen that for each $z\in \{x_1,\ldots,x_m\}$
\begin{align*}
\|z-Qz\|&\leq\|X-QX\|_F\leq \frac{s_{n,m}(r)}{\sqrt{r}}\delta,
\end{align*} 
and this in turn implies that
\begin{align*}
\left\|y-\sum_{k=1}^r c_kx_{j_k}\right\|&=\left\|y-\sum_{k=1}^r c_k x_{j_k}-\left(\hat{y}-\sum_{k=1}^r c_k \hat{x}_{j_k}\right)\right\|\\
&=\left\|y-\sum_{k=1}^r c_kx_{j_k}-Q\left(y-\sum_{k=1}^r c_k x_{j_k}\right)\right\|\\
&\leq \left(\sum_{k=1}^r|c_k|^2\right)^{\frac{1}{2}}s_{n,m}(r)\delta+\|(I_n-Q)y\|.
\end{align*}
This completes the proof.
\end{proof}

As a direct implication of theorem \ref{thm:main-SpLSSolver} one can obtain the following corollary.

\begin{cor}\label{cor:SpLSSolver}
Given $\delta>0$, $A\in \mathbb{C}^{m\times n}$ and $y\in \mathbb{C}^m$. If $\mathrm{rk}_\delta\left(A\right)>0$ and if we set $r=\mathrm{rk}_\delta\left(A\right)$ and $s_{n,m}(r)=\sqrt{r(\min\{m,n\}-r)}$ then, there are $x\in\mathbb{C}^n$ and a rank $r$ orthogonal projector $Q$ that does not depend on $y$, such that $\|Ax-y\|\leq \|x\|s_{n,m}(r)\delta+\|(I_m-Q)y\|$ and $x$ has at most $r$ nonzero entries.
\end{cor}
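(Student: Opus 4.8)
The plan is to deduce this directly from Theorem \ref{thm:main-SpLSSolver} by applying it to the columns of $A$ and then repackaging the resulting linear combination as a sparse coordinate vector. The statement is really a reformulation of the theorem in ``matrix times unknown vector'' form, so I expect no new analytic content, only a careful matching of the data.

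First I would write $A=[a_1\,a_2\,\cdots\,a_n]$ with columns $a_1,\ldots,a_n\in\mathbb{C}^m$, and invoke Theorem \ref{thm:main-SpLSSolver} with these $n$ columns playing the role of the vectors $x_1,\ldots,x_m$ and with the given $y\in\mathbb{C}^m$ in the role of $y$. The one point requiring attention is that the ambient dimension and the number of columns exchange their roles relative to the theorem: here the columns live in $\mathbb{C}^m$ and there are $n$ of them. Because $\min\{m,n\}=\min\{n,m\}$, the quantity $s_{n,m}(r)=\sqrt{r(\min\{m,n\}-r)}$ is unaffected by this exchange, and $r=\mathrm{rk}_\delta(A)>0$ is intrinsic to $A$, so the hypotheses of the theorem hold. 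The theorem then furnishes a rank $r$ orthogonal projector $Q$ on $\mathbb{C}^m$ assembled from the left singular vectors of $A$, which therefore depends only on $A$ and not on $y$, together with indices $j_1,\ldots,j_r$ and scalars $c_1,\ldots,c_r\in\mathbb{C}$ satisfying
\[
\left\|y-\sum_{k=1}^r c_k a_{j_k}\right\|\leq\left(\sum_{k=1}^r|c_k|^2\right)^{\frac{1}{2}}s_{n,m}(r)\delta+\|(I_m-Q)y\|.
\]

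Next I would define $x\in\mathbb{C}^n$ by placing $c_k$ in the $j_k$-th coordinate for $k=1,\ldots,r$ and zero in every other coordinate. By construction $x$ has at most $r$ nonzero entries, $Ax=\sum_{k=1}^r c_k a_{j_k}$, and $\|x\|=\left(\sum_{k=1}^r|c_k|^2\right)^{1/2}$ since the only nonvanishing components of $x$ are the $c_k$. Substituting these three identities into the displayed inequality immediately yields $\|Ax-y\|\leq\|x\|s_{n,m}(r)\delta+\|(I_m-Q)y\|$, which is the asserted bound, and the sparsity and $y$-independence of $Q$ are already established. The hard part, such as it is, is purely the bookkeeping of the role reversal of $m$ and $n$ and confirming that $Q$ acts on the correct space $\mathbb{C}^m$; once the columns of $A$ are correctly identified as the vectors to which the theorem applies, the passage to the sparse vector $x$ via coordinate placement is routine.
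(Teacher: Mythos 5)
Your proposal is correct and follows essentially the same route as the paper's own proof: apply Theorem \ref{thm:main-SpLSSolver} to the columns of $A$ together with $y$, then assemble the sparse vector $x$ by placing the coefficients $c_k$ in the coordinates $j_k$. You are in fact slightly more careful than the paper in making the $m$/$n$ role reversal and the symmetry of $s_{n,m}(r)$ explicit, and in noting that $Q$ is built from the left singular vectors of $A$ and hence is independent of $y$.
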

\begin{proof}
Let us set $x=\mathbf{0}_{n, 1}$ and $a_j=A\hat{e}_{j,n}$ for $j=1,\ldots,n$. Since $r=\mathrm{rk}_\delta\left(A\right)>0$ and $s_{n,m}(r)=\sqrt{r(\min\{m,n\}-r)}$, by theorem \ref{thm:main-SpLSSolver} we will have that there is a rank $r$ orthogonal projector $Q$ such that $\|A-QA\|_F\leq (s_{n,m}(r)/\sqrt{r})\delta$, and without loss of generality $r$ vectors $a_{j_1},\ldots,a_{j_r}\in \{a_1,\ldots,a_n\}$ and $r$ scalars $c_1\ldots,c_r\in \mathbb{C}$ with $j_1\leq j_2\leq \cdots\leq j_r$ (reordering the indices $j_k$ if necessary), such that $\|y-\sum_{k=1}^r c_ka_{j_k}\|\leq \left(\sum_{k=1}^r|c_k|^2\right)^{\frac{1}{2}}s_{n,m}(r)\delta+\|(I_m-Q)y\|$. If we set $x_{j_k}=c_{k}$ for $k=1,\ldots,r$, we will have that $\|x\|=\left(\sum_{k=1}^r|c_k|^2\right)^{\frac{1}{2}}$ and $Ax=\sum_{k=1}^r x_{j_k}a_{j_k}=\sum_{k=1}^r c_ka_{j_k}$. Consequently, $\|Ax-y\|\leq \|x\|s_{n,m}(r)\delta+\|(I_m-Q)y\|$. This completes the proof.
\end{proof}

Given $\delta>0$, and two matrices $A\in \mathbb{C}^{m\times n}$ and $Y\in\mathbb{C}^{m\times p}$, we will write $AX\approx_\delta Y$ to represent the problem of finding $X\in \mathbb{C}^{n\times p}$, $\alpha,\beta\geq 0$ and an orthogonal projector $Q$ such that $\|AX-Y\|_F\leq \alpha\delta+\beta\|(I_m-Q)Y\|_F$. The matrix $X$ will be called a solution to the problem $AX\approx_\delta Y$.

\begin{thm}\label{thm:MEQ-Solver}
Given $\delta>0$, and two matrices $A\in \mathbb{C}^{m\times n}$ and $Y\in\mathbb{C}^{m\times p}$. If $\mathrm{rk}_\delta\left(A\right)>0$ and if we set $r=\mathrm{rk}_\delta\left(A\right)$ 
then, there is a solution $X$ to the problem $AX\approx_\delta Y$ with at most $rp$ nonzero entries.
\end{thm}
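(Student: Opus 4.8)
The plan is to reduce the matrix problem $AX\approx_\delta Y$ to $p$ independent instances of the vector problem already solved in Corollary \ref{cor:SpLSSolver}, one for each column of $Y$. First I would write $Y=\begin{bmatrix} y_1 & y_2 & \cdots & y_p\end{bmatrix}$ with each $y_k\in\mathbb{C}^m$. Since $\mathrm{rk}_\delta(A)=r>0$, for every $k$ Corollary \ref{cor:SpLSSolver} (with $s_{n,m}(r)=\sqrt{r(\min\{m,n\}-r)}$) furnishes a vector $x_k\in\mathbb{C}^n$ having at most $r$ nonzero entries, together with a rank $r$ orthogonal projector $Q$, such that
\[
\|Ax_k-y_k\|\leq \|x_k\|\,s_{n,m}(r)\delta+\|(I_m-Q)y_k\|.
\]
The decisive feature I would exploit is that the projector $Q$ produced by the corollary does not depend on the right-hand side, so the \emph{same} $Q$ serves all $p$ columns simultaneously.

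Next I would assemble $X=\begin{bmatrix} x_1 & x_2 & \cdots & x_p\end{bmatrix}\in\mathbb{C}^{n\times p}$. Each column carries at most $r$ nonzero entries, hence $X$ has at most $rp$ nonzero entries, which is exactly the sparsity claim. By construction the columns of $AX-Y$ are the vectors $Ax_k-y_k$ and the columns of $(I_m-Q)Y$ are the vectors $(I_m-Q)y_k$, so the columnwise estimates are precisely the ingredients needed to assemble the Frobenius-norm bound.

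Finally I would combine the columnwise inequalities. Starting from $\|AX-Y\|_F^2=\sum_{k=1}^p\|Ax_k-y_k\|^2$, I would apply the columnwise bound and then Minkowski's inequality in $\ell^2$ to the two nonnegative sequences $(\|x_k\|\,s_{n,m}(r)\delta)_k$ and $(\|(I_m-Q)y_k\|)_k$, obtaining
\begin{align*}
\|AX-Y\|_F &\leq s_{n,m}(r)\delta\left(\sum_{k=1}^p\|x_k\|^2\right)^{1/2}+\left(\sum_{k=1}^p\|(I_m-Q)y_k\|^2\right)^{1/2}\\
&=\|X\|_F\,s_{n,m}(r)\delta+\|(I_m-Q)Y\|_F.
\end{align*}
Setting $\alpha=\|X\|_F\,s_{n,m}(r)\geq 0$ and $\beta=1\geq 0$ then exhibits $X$ as a solution to $AX\approx_\delta Y$ in the sense of the definition preceding the theorem.

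As for the main obstacle: once Corollary \ref{cor:SpLSSolver} is in hand the computation is routine, so the only genuine subtlety is the reliance on the $y$-independence of $Q$. If $Q$ were allowed to vary column by column, the residual terms $\|(I_m-Q)y_k\|$ could not be repackaged as the single quantity $\|(I_m-Q)Y\|_F$, and the clean form $\alpha\delta+\beta\|(I_m-Q)Y\|_F$ demanded by the definition of $\approx_\delta$ would break down. Verifying that the corollary indeed supplies one common projector for all the columns is therefore the step I would check most carefully.
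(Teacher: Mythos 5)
Your proposal is correct and follows essentially the same route as the paper: reduce to the $p$ column subproblems, invoke Corollary \ref{cor:SpLSSolver} with the single $y$-independent projector $Q$ (which the corollary explicitly guarantees), assemble $X$ columnwise, and count at most $r$ nonzeros per column. The only difference is your final aggregation via Minkowski's inequality in $\ell^2$, which yields the sharper constants $\alpha=\|X\|_F\,s_{n,m}(r)$ and $\beta=1$, whereas the paper bounds each summand by the worst case and ends up with an extra factor of $\sqrt{p}$ in both $\alpha$ and $\beta$.
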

\begin{proof}
Since $r=\mathrm{rk}_\delta\left(A\right)>0$ we can apply corollary \ref{cor:SpLSSolver} to each subproblem $Ax_j\approx_\delta Y\hat{e}_{j,p}$, to obtain $p$ solutions $x_1,\ldots,x_p$ and a rank $r$ orthogonal projector $Q$ such that $\|Ax_j-Y\hat{e}_{j,p}\|\leq \|x_j\|s_{n,m}(r)\delta+\|(I_m-Q)Y\hat{e}_{j,p}\|$ for each $j=1,\ldots,p$ with $s_{n,m}(r)=\sqrt{r(\min\{m,n\}-r)}$, and each $x_j$ has at most $r$ nonzero entries. Consequently, if we set
\begin{align*}
X=\begin{bmatrix}
| & & |\\
x_1 & \cdots & x_p\\
| & & |
\end{bmatrix}
\end{align*}
we will have that
\begin{align*}
\|AX-Y\|_F^2&= \sum_{j=1}^p \|Ax_j-Y\hat{e}_{j,p}\|^2\\
&\leq \sum_{j=1}^p(\|x_j\|s_{n,m}(r)\delta+\|(I_m-Q)Y\hat{e}_{j,p}\|)^2\\
&\leq \sum_{j=1}^p(\|X\|_F s_{n,m}(r)\delta+\|(I_m-Q)Y\|_F)^2\\
&=p(\|X\|_F s_{n,m}(r)\delta+\|(I_m-Q)Y\|_F)^2
\end{align*}
and this in turn implies that if we set $\alpha=\sqrt{p}s_{n,m}(r)\|X\|_F$ and $\beta=\sqrt{p}$, then
\begin{align*}
\|AX-Y\|_F\leq \alpha \delta + \beta \|(I_m-Q)Y\|_F.
\end{align*}
Therefore, $X$ es a solution to $AX\approx_\delta Y$ with at most $rp$ nonzero entries. This completes the proof.
\end{proof}

Although the sparse linear least squares solver algorithms and theoretical results in this article build on similar principles to the ones considered by Boutsidis and Magdon-Ismail in \cite{SpLSRegression} and by Brunton, Proctor, and Kutz in \cite{BruntonSINDy}. One of the main differences of the approach considered in this study with the approach used in \cite{SpLSRegression}, is that given a least squares linear matrix equation $X=\arg\min_Y\|AY-B\|_F$, eventhough the rank $\mathrm{rk}_\delta(A)$ approximation $A_\delta=QA$ corresponding to the matrix of coefficients $A$, is computed in a generic way using the orthogonal projector $Q$ determined by theorem \ref{thm:main-SpLSSolver} and corollary \ref{cor:SpLSSolver} that in turn can be computed using a truncated economy-sized singular value decomposition of $A$ with approximation error $\varepsilon=\mathcal{O}(\delta)>0$, the selection process of each ordered set of the columns of $A_\delta$ corresponding to each column of the sparse approximate representation $\hat{X}$ of the reference least squares solution $X$, is not random but $B$-dependent. And the main difference between the approach implemented in this study and the one implemented in \cite{BruntonSINDy}, is that the sparse approximation process used here is based on reference solutions of least squares problems that involve submatrices of the low-rank approximation $A_\delta$ of the matrix of coefficients $A$ instead of submatrices of the original matrix $A$.

More specifically, if $A$ has $n$ columns, once the low-rank approximation $A_\delta$ of $A$ is computed along the lines of theorem \ref{thm:main-SpLSSolver}, corollary \ref{cor:SpLSSolver} and theorem \ref{thm:MEQ-Solver}, for each column $x_j$ of an initial reference least squares solution $X$ of the problem $X=\arg\min_Y\|A_\delta Y-B\|_F$, one can set a threshold $\varepsilon>0$, find  an integer $N_j(\varepsilon)\leq \mathrm{rk}_\delta(A)$ and compute a permutation $\sigma_j:\{1,\ldots,n\}\to\{1,\ldots,n\}$ based on the moduli $|x_{i,j}|$ of the entries of $x_j$ according to the following conditions 
\begin{align*}
&|x_{\sigma_j(1),j}|\geq|x_{\sigma_j(2),j}| \geq \cdots \geq |x_{\sigma_j(N_j(\varepsilon)),j}|>\varepsilon,\\
&\varepsilon \geq |x_{\sigma_j(N_j(\varepsilon)+1),j}|\geq \cdots \geq |x_{\sigma_j(n-1),j}| \geq |x_{\sigma_j(n),j}|.
\end{align*}

For each ordered subset $\hat{a}_{\sigma_j(1)},\ldots,\hat{a}_{\sigma_j(N_j(\varepsilon))}$ of columns of $A_\delta$, we can solve the problems
\begin{align*}
\hat{x}_j=\arg\min_y\left\|\begin{bmatrix}
| & & |\\
\hat{a}_{\sigma_j(1)} & \cdots & \hat{a}_{\sigma_j(N_j(\varepsilon))}\\
| & & |
\end{bmatrix}y-b_j\right\|
\end{align*}
for each $j=1,\ldots,p$.

If we define $p$ vectors $\tilde{x}_1,\ldots,\tilde{x}_p$ according to the following assignments
\begin{align*}
\tilde{x}_{\sigma_j(k),j}=
\left\{
\begin{array}{ll}
\hat{x}_{k,j}, & 1\leq k\leq N_j(\varepsilon),\\
0, & N_j(\varepsilon) <k\leq n
\end{array}
\right.
\end{align*}
where $\tilde{x}_{i,j}$ and $\hat{x}_{k,j}$ denote the entries of each pair of vectors $\tilde{x}_j$ and $\hat{x}_j$, respectively, we obtain a new approximate sparse solution 
\begin{align*}
\tilde{X}=\begin{bmatrix}
| & & |\\
\tilde{x}_1 & \cdots & \tilde{x}_{p}\\
| & & |
\end{bmatrix}
\end{align*}
to the problem $\hat{X}=\arg\min_Y\|A_\delta Y-B\|_F$. Using $\tilde{X}$ as a new reference solution one can repeat this process for some prescribed number of times, or until some stopping criterion is met, in order to find a sparser representation of the initial reference solution $X$.

The results and ideas presented in this section can be translated into a sparse linear least squares solver algorithm described by algorithm \ref{alg:main_SLMESolver_alg_1} in \S \ref{section_algorithms}.

\section{Low-rank approximation methods for sparse model identification}

\label{section_SDSI}

\subsection{Sparse Identification of Transition Operators for Time Series with Symmetries}

Given a sequence $\{x_t\}_{t\geq 1}\subset \mathbb{C}^n$, we say that $\{x_t\}_{t\geq 1}$ is a time series of a system $(\Sigma,\mathscr{T})$, if $\{x_t\}_{t\geq 1}\subset \Sigma$ and $\mathscr{T}(x_t)=x_{t+1}$ for each $t\geq 1$. If in addition, there is a finite group $G_N=\{g_1,\ldots,g_{N}\}\subset \mathbb{U}(n)$ such that 
\begin{align*}
\mathscr{T}(g_jx_t)=g_j\mathscr{T}(x_{t}),
\end{align*}
for each $t\geq 1$ and each $g_j\in G_N$, we will say that the system $(\Sigma,\mathscr{T})$ is $G_N$-equivariant and that the sequence $\{x_t\}_{t\geq 1}$ is a time series with symmetries.

We will say that a matrix $S\in \mathbb{C}^{n\times n}$ is symmetric with respect to a finite group $G_N=\{g_1,\ldots,g_N\}\subset \mathbb{U}(n)$ if 
\[g_j S=S g_j\] 
for each $g_j\in G_N$. We will write $\mathbb{S}(n)^{G_N}$ to denote the set of all matrices in $\mathbb{C}^{n\times n}$ that are symmetric with respect to the group $G_N$. 

Given an integer $L\geq 1$, a finite group $G_N\subset \mathbb{U}(n)$ with $\sharp(G_N)=N$, and a sample $\Sigma_T=\{x_t\}_{t=1}^T$ from a time series $\{x_t\}_{t\geq 1}$ in the state space $\Sigma\subset \mathbb{C}^n$ of some $G_N$-equivariant system $(\Sigma,\mathscr{T})$, we will write $\mathscr{H}_{L}(\Sigma_T,G_N)$ to denote the structured block matrix with Hankel-type matrix blocks that is determined by the following expression.
\begin{align*}
\mathscr{H}_{L}(\Sigma_T,G_N)=\begin{bmatrix}
(I_L\otimes g_1)\mathscr{H}_L(\Sigma_T) & \cdots & (I_L\otimes g_N)\mathscr{H}_L(\Sigma_T)
\end{bmatrix}
\end{align*}

\begin{rem}\label{rem:Hankel-block}
Since $G_N\subset \mathbb{U}(n)$ is a group, one of the elements in $G_N$ is equal to $I_n$, consequently, one of the matrix blocks of $\mathscr{H}_{L}(\Sigma_T,G_N)$ is equal to $I_L\otimes I_n \mathscr{H}_{L}(\Sigma_T)=\mathscr{H}_{L}(\Sigma_T)$.
\end{rem}

Let us define the main sparse model identification problem for time series with symmetries. 

\begin{pr}{\bf Sparse model identification problem for time series with symmetries.}\label{prob:prob-1}
Given $\delta>0$, an integer $L>0$, a finite group $G_N\subset\mathbb{U}(n)$ with $\sharp(G_N)=N$, and a sample $\Sigma_T=\{x_t\}_{t=1}^{T}$ from a time series $\{x_t\}_{t\geq 1}\subset \mathbb{C}^n$ of a $G_N$-equivariant system $(\Sigma,\mathscr{T})$ with transition operator $\mathscr{T}$ to be identified. Let $\Sigma_{0}=\{x_1,\ldots,x_{T-1}\}$, $\Sigma_{1}=\{x_2,\ldots,x_{T}\}$, $\mathbf{H}_{L,k}=\mathscr{H}_{L}(\Sigma_k,G_N)$ for $k=0,1$ and $\tilde{G}_N=\{I_L\otimes g_j:g_j\in G_N\}$. Determine if it is possible to compute a sparse matrix $\hat{A}_T$, a matrix $A_T\in \mathbb{S}(nL)^{\tilde{G}_N}$, an orthogonal projector $Q$ and three nonnegative numbers $\mathbb{D},\mathbb{E}$ and $\mathbb{F}$ such that if we set
\begin{align*}
\nu&=\mathbb{D}\delta+\sqrt{nL}\|\mathbf{H}_{L,1}(I_{N(T-L)}-Q)\|_F,\\
\varepsilon&=\mathbb{E}\delta+\mathbb{F}\|\mathbf{H}_{L,1}(I_{N(T-L)}-Q)\|_F,
\end{align*}
then
\begin{align*}
&\|\mathbf{H}_{L,1}-\hat{A}_T\mathbf{H}_{L,0}\|_F\leq \nu,\\
&\left\|\mathscr{T}(x_t)-\mathscr{P}_L\hat{A}_T^tX_1\right\|\leq \varepsilon,\\
&\left\|\mathscr{T}(g_jx_t)-\mathscr{P}_L\hat{A}_T^t (I_L\otimes g_j) X_1\right\|\leq \varepsilon,\\
&\|\mathbf{H}_{L,1}-A_T\mathbf{H}_{L,0}\|_F\leq \nu,\\
&\left\|\mathscr{T}(x_t)-\mathscr{P}_LA_T^tX_1\right\|\leq \varepsilon,\\
&\left\|\mathscr{T}(g_jx_t)-\mathscr{P}_LA_T^t (I_L\otimes g_j) X_1\right\|\leq \varepsilon
\end{align*}
for each $t=1,\ldots,T-L$ and each $g_j\in G_N$, with $\mathscr{P}_L$ $=$ $\hat{e}_{1,L}^\top\otimes I_n$ and $X_1=\begin{bmatrix}
x_1 & \cdots & x_{L}
\end{bmatrix}^\top$. 
\end{pr}

\begin{defn}
We will write $\mathbf{SDSI}(\Sigma_T,G_N,L,\delta)$ to denote the set of $6$-tuples of solutions $(\hat{A}_T,A_T,Q$ $,$ $\mathbb{D},\mathbb{E},\mathbb{F})$ to problem \ref{prob:prob-1} based on data $\Sigma_T,G_N,L,\delta$.
\end{defn}

\begin{thm}\label{thm:main-thm}
Given $\delta>0$, a finite group $G_N\subset\mathbb{U}(n)$ with $\sharp(G_N)=N$, and a sample $\Sigma_T=\{x_t\}_{t=1}^{T}$ from a time series $\{x_t\}_{t\geq 1}\subset \mathbb{C}^n$ of a $G_N$-equivariant system $(\Sigma,\mathscr{T})$ with transition operator $\mathscr{T}$ to be identified. If there is an integer $L>0$ such that $\mathrm{rk}_\delta\left(\mathscr{H}_{L}\left(\{x_t\}_{t=1}^{T-1},G_N\right)\right)>0$ then, there is $(\hat{A}_T,A_T,Q,\mathbb{D},\mathbb{E},\mathbb{F})\in \mathbf{SDSI}(\Sigma_T,G_N,L,\delta)$.
\end{thm}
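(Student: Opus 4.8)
The plan is to reduce Problem \ref{prob:prob-1} to the matrix-equation solver of Theorem \ref{thm:MEQ-Solver}, and then to repair the two structural features the raw solution does not automatically have: the $\tilde G_N$-symmetry of $A_T$ and the iterated prediction bounds. Since the identity $\hat A_T\mathbf{H}_{L,0}\approx\mathbf{H}_{L,1}$ carries the unknown on the \emph{left}, I would first transpose and instead solve $\mathbf{H}_{L,0}^\top Z\approx_\delta\mathbf{H}_{L,1}^\top$. By Lemma \ref{lem:rk-delta-inv} the hypothesis yields $\mathrm{rk}_\delta(\mathbf{H}_{L,0}^\top)=\mathrm{rk}_\delta(\mathbf{H}_{L,0})=r>0$, so Theorem \ref{thm:MEQ-Solver} applies and produces a sparse $Z=\hat A_T^\top$ (with at most $r\,nL$ nonzero entries), a rank-$r$ orthogonal projector $Q_0$ on $\mathbb{C}^{N(T-L)}$, and constants $\alpha=\sqrt{nL}\,s_{nL,N(T-L)}(r)\|Z\|_F$, $\beta=\sqrt{nL}$ with $\|\mathbf{H}_{L,0}^\top Z-\mathbf{H}_{L,1}^\top\|_F\le\alpha\delta+\beta\|(I-Q_0)\mathbf{H}_{L,1}^\top\|_F$. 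Transposing back and setting $Q=Q_0^\top$ (again an orthogonal projector, since $Q_0^\top=\overline{Q_0}$ is idempotent and self-adjoint) and $\mathbb{D}=\alpha$ gives exactly $\|\mathbf{H}_{L,1}-\hat A_T\mathbf{H}_{L,0}\|_F\le\nu$ for $\nu=\mathbb{D}\delta+\sqrt{nL}\,\|\mathbf{H}_{L,1}(I-Q)\|_F$, which fixes $Q$, $\mathbb{D}$, and the constant $\beta=\sqrt{nL}$ appearing in $\nu$.

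The central structural step is the symmetrization, and it rests on a block-equivariance property of the Hankel data. Because $G_N$ is a group, left multiplication by $I_L\otimes g_i$ merely permutes the blocks $(I_L\otimes g_j)\mathscr{H}_L(\Sigma_k)$ of $\mathbf{H}_{L,k}$, so there is a real orthogonal block-permutation matrix $P_i=\Pi_i\otimes I_{T-L}$ with $(I_L\otimes g_i)\mathbf{H}_{L,k}=\mathbf{H}_{L,k}P_i$ for $k=0,1$, and $i\mapsto P_i$ is a group homomorphism, so $P_{j^{-1}}=P_j^{-1}$. I would then define $A_T=\tfrac{1}{N}\sum_{j=1}^N(I_L\otimes g_j)\hat A_T(I_L\otimes g_j)^\ast$; the standard averaging argument shows $(I_L\otimes g_i)A_T=A_T(I_L\otimes g_i)$ for all $i$, so $A_T\in\mathbb{S}(nL)^{\tilde G_N}$. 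Writing $\hat A_T\mathbf{H}_{L,0}=\mathbf{H}_{L,1}+E$ with $\|E\|_F\le\nu$, and using $(I_L\otimes g_j)^\ast\mathbf{H}_{L,0}=\mathbf{H}_{L,0}P_j^{-1}$ together with $(I_L\otimes g_j)\mathbf{H}_{L,1}=\mathbf{H}_{L,1}P_j$, the group-invariant part telescopes to $\mathbf{H}_{L,1}$ exactly, leaving
\[
A_T\mathbf{H}_{L,0}-\mathbf{H}_{L,1}=\tfrac{1}{N}\sum_{j=1}^N(I_L\otimes g_j)\,E\,P_j^{-1}.
\]
Unitary/orthogonal invariance of $\|\cdot\|_F$ then gives $\|\mathbf{H}_{L,1}-A_T\mathbf{H}_{L,0}\|_F\le\|E\|_F\le\nu$, so the fourth required inequality holds with the \emph{same} $\nu$ and $Q$. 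This symmetry preservation is where the equivariance hypothesis is genuinely used, and I expect it to be the main obstacle.

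Finally I would establish the prediction bounds by a telescoping iteration carried out uniformly for $A\in\{\hat A_T,A_T\}$. Let $w_s=\bigl(x_s^\top,\ldots,x_{s+L-1}^\top\bigr)^\top\in\mathbb{C}^{nL}$ denote the length-$L$ windows, so $X_1=w_1$, while $(I_L\otimes g_j)w_s$ is a column of $\mathbf{H}_{L,0}$ and $(I_L\otimes g_j)w_{s+1}$ the matching column of $\mathbf{H}_{L,1}$ for $1\le s\le T-L$. Since the norm of any single column is bounded by the Frobenius norm of the whole matrix, both residual bounds give $\|A(I_L\otimes g_j)w_s-(I_L\otimes g_j)w_{s+1}\|\le\nu$ for every $g_j$ and every such $s$. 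A telescoping sum, with submultiplicativity $\|A^{t-s}v\|\le\|A\|_F^{\,t-s}\|v\|$, then yields $\|A^t(I_L\otimes g_j)w_1-(I_L\otimes g_j)w_{t+1}\|\le\nu\sum_{k=0}^{t-1}\|A\|_F^{\,k}$. Using $\mathscr{P}_L(I_L\otimes g_j)=\hat e_{1,L}^\top\otimes g_j$, hence $\mathscr{P}_L(I_L\otimes g_j)w_{t+1}=g_jx_{t+1}=\mathscr{T}(g_jx_t)$, together with $\|\mathscr{P}_L\|=1$, I obtain $\|\mathscr{T}(g_jx_t)-\mathscr{P}_LA^t(I_L\otimes g_j)X_1\|\le\nu\sum_{k=0}^{t-1}\|A\|_F^{\,k}$; the case $g_j=I_n$ is the non-equivariant bound. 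Setting $C=\max_{A\in\{\hat A_T,A_T\}}\sum_{k=0}^{T-L-1}\|A\|_F^{\,k}$, $\mathbb{E}=C\mathbb{D}$, and $\mathbb{F}=C\sqrt{nL}$ makes all four prediction inequalities hold with $\varepsilon=\mathbb{E}\delta+\mathbb{F}\|\mathbf{H}_{L,1}(I-Q)\|_F$ for every $t=1,\ldots,T-L$ and every $g_j$. Collecting the data produced, $(\hat A_T,A_T,Q,\mathbb{D},\mathbb{E},\mathbb{F})$ is the desired element of $\mathbf{SDSI}(\Sigma_T,G_N,L,\delta)$.
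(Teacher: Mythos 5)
Your proposal is correct and follows the same overall architecture as the paper's proof: transpose so the unknown sits on the right, apply the low-rank sparse solver machinery of \S\ref{section:linear-solvers} (the paper applies corollary \ref{cor:SpLSSolver} row by row and assembles $\hat{A}_T$ from the rows $a_{j,k,T}$, which is exactly the content of theorem \ref{thm:MEQ-Solver} that you invoke wholesale), set $Q=\hat{Q}^\top$, symmetrize by group averaging to get $A_T\in\mathbb{S}(nL)^{\tilde{G}_N}$, and telescope for the prediction bounds. The one place where you genuinely diverge is the residual bound for the symmetrized matrix. The paper controls $A_T$ only columnwise, via $\|X_{t+1}-A_TX_t\|\leq \frac{1}{N}\sum_j\|(I_L\otimes g_j)X_{t+1}-\hat{A}_T(I_L\otimes g_j)X_t\|\leq\|\mathbf{H}_{L,1}-\hat{A}_T\mathbf{H}_{L,0}\|_F$, using that each $(I_L\otimes g_j)X_t$ is a column of $\mathbf{H}_{L,0}$; summing those columnwise bounds would only give $\|\mathbf{H}_{L,1}-A_T\mathbf{H}_{L,0}\|_F\leq\sqrt{N(T-L)}\,\nu$, so the full Frobenius inequality demanded by problem \ref{prob:prob-1} is not explicitly derived there. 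Your block-permutation identity $(I_L\otimes g_i)\mathbf{H}_{L,k}=\mathbf{H}_{L,k}P_i$, which makes the invariant part telescope exactly to $\mathbf{H}_{L,1}$ and leaves the error as $\frac{1}{N}\sum_j(I_L\otimes g_j)EP_j^{-1}$, yields $\|\mathbf{H}_{L,1}-A_T\mathbf{H}_{L,0}\|_F\leq\|E\|_F\leq\nu$ with the same constant, so it both recovers the paper's columnwise estimates and closes that small gap; the cost is only the (routine) verification that left translation by $g_i$ permutes the blocks of $\mathscr{H}_L(\Sigma_k,G_N)$. The remaining differences (averaging over $g_j$ versus $g_j^\ast$, summing $\|A\|_F^k$ to $T-L-1$ rather than $T-1$, taking a maximum over $\{\hat{A}_T,A_T\}$ instead of using $\|A_T\|_F\leq\|\hat{A}_T\|_F$) are cosmetic.
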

\begin{proof}
Let $\mathbf{H}_{L,k}=\mathscr{H}_{L}\left(\Sigma_k,G_N\right)$ for $k=0,1$, with each $\Sigma_{k}$ defined as in problem \ref{prob:prob-1}, and let us write $h_{j,k}$ to denote the $n\times (N(T-L))$ submatrix corresponding to the $j$-row block of $\mathbf{H}_{L,k}$ for $k=0,1$. By definition of $\mathbf{H}_{L,k}$ we will have that.
\begin{align}
h_{j,1}=h_{j+1,0}, \:\: 1\leq j\leq L-1
\label{eq:ID-cond-1}
\end{align}
Let $\Sigma_{T-1}=\{x_{t}\}_{t=1}^{T-1}$. It can be seen that $\mathscr{H}_{L}\left(\Sigma_{T-1},G_N\right)=\mathbf{H}_{L,0}$ and this in turn implies that
\begin{align}
\mathscr{H}_{L+1}\left(\Sigma_T,G_N\right)=\begin{bmatrix}
\mathscr{H}_{L}\left(\Sigma_{T-1},G_N\right) \\
h_{L,1}
\end{bmatrix}
=\begin{bmatrix}
\mathbf{H}_{L,0} \\
h_{L,1}
\end{bmatrix}.
\label{eq:embedding-ID-cond-2}
\end{align}
Since $\mathrm{rk}_\delta\left(\mathbf{H}_{L,0}\right)=\mathrm{rk}_\delta\left(\mathscr{H}_{L}\left(\Sigma_{T-1},G_N\right)\right)>0$, by lemma \ref{lem:rk-delta-inv} we will have that
\begin{align}
\mathrm{rk}_\delta\left(\mathbf{H}_{L,0}^\top\right)=\mathrm{rk}_\delta\left(\mathbf{H}_{L,0}\right)>0.
\label{eq:ID-cond-2}
\end{align}
Let us set $r=\mathrm{rk}_\delta\left(\mathbf{H}_{L,0}\right)$, $\mathcal{C}=\sqrt{r(\min\{nL,N(T-L)\}-r)}$, and let us write $h_{j,k,1}$ to denote the $k$-row of the $j$-row block $h_{j,1}$ of $\mathbf{H}_{L,1}$ for $j=1,\ldots,L$ and $k=1,\ldots,n$. By \eqref{eq:ID-cond-1} and \eqref{eq:ID-cond-2}, applying corollary \ref{cor:SpLSSolver} to each pair $\mathbf{H}_{L,0}^\top,h_{j,k,1}^\top$ we can compute an orthogonal projector $\hat{Q}$ and a vector $a_{j,k,T}\in \mathbb{C}^{1\times (N(T-L))}$ such that 
\begin{align}
\left\|\mathbf{H}_{L,0}^\top a_{j,k,T}^\top-h_{j,k,1}^\top\right\|&\leq \mathcal{C}\left \|a_{j,k,T}^\top\right\|\delta\nonumber\\
 &+\left\|(I_{N(T-L)}-\hat{Q})h_{j,k,1}^\top\right\|\nonumber\\
\label{eq:err-bound-1}
\end{align}
and $a_{j,k,T}$ has at most $r$ nonzero entries for each $j=1,\ldots,L$ and each $k=1,\ldots,n$. Let us set
\begin{align*}
\hat{A}_T&=\begin{bmatrix}
a_{1,1,T}^\top & \cdots & a_{1,n,T}^\top & \cdots & a_{L,1,T}^\top & \cdots & a_{L,n,T}^\top
\end{bmatrix}^\top,\\
Q&=\hat{Q}^\top
\end{align*}
and for each $t=1,\ldots,T-L$ and each $g_j\in G_N$, let us set
\begin{align*}
X_t&=\begin{bmatrix}
x_t^\top & \cdots & x_{t+L-1}^\top
\end{bmatrix}^\top\\
Y_{j,t}&=I_L\otimes g_jX_t.
\end{align*}
It can be easily verified that $Q$ is an orthogonal projector, and by \eqref{eq:err-bound-1} if we set $\mathbb{D}=\sqrt{nL}\|\hat{A}_T\|_F\mathcal{C}$ we will have that
\begin{align}
\left\|\mathbf{H}_{L,1}-\hat{A}_T\mathbf{H}_{L,0}\right\|_F&\leq \mathbb{D}\delta+\sqrt{nL}\left\|\mathbf{H}_{L,1}(I_{N(T-L)}-\hat{Q}^\top)\right\|_F\nonumber\\
&=\mathbb{D}\delta+\sqrt{nL}\left\|\mathbf{H}_{L,1}(I_{N(T-L)}-Q)\right\|_F.
\label{eq:main-bound-1}
\end{align}

Let us set $\mathbb{E}=\mathbb{D}\left(\sum_{t=0}^{T-1} \|\hat{A}_T\|_F^t\right)$ and $\mathbb{F}=\sqrt{nL}\left(\sum_{t=0}^{T-1} \|\hat{A}_T\|_F^t\right)$, by \eqref{eq:main-bound-1} and remark \ref{rem:Hankel-block} we will have that
\begin{align*}
\|X_{t+1}-\hat{A}_TX_t\|&\leq \left\|\mathbf{H}_{L,1}-\hat{A}_T\mathbf{H}_{L,0}\right\|_F,\\
&\leq  \mathbb{D}\delta+\sqrt{nL}\left\|\mathbf{H}_{L,1}(I_{N(T-L)}-Q)\right\|_F\\
\left\|Y_{j,t+1}-\hat{A}_TY_{j,t}\right\|&\leq\left\|\mathbf{H}_{L,1}-\hat{A}_T\mathbf{H}_{L,0}\right\|_F\\
&\leq \mathbb{D}\delta+\sqrt{nL}\left\|\mathbf{H}_{L,1}(I_{N(T-L)}-Q)\right\|_F
\end{align*}
and this implies that
\begin{align}
\left\|X_{t+1}-\hat{A}_T^tX_1\right\|&\leq \left\|X_{t+1}-\hat{A}_{T}X_{t}\right\|+\left\|\hat{A}_{T}X_{t}-\hat{A}_T^tX_1\right\|\nonumber\\
&\leq \mathbb{D}\delta+\sqrt{nL}\left\|\mathbf{H}_{L,1}(I_{N(T-L)}-Q)\right\|_F\nonumber\\
&+\|\hat{A}_T\|_F\left\|X_{t}-\hat{A}_T^{t-1}X_1\right\|\nonumber\\
&\cdots\leq \mathbb{E}\delta+\mathbb{F}\left\|\mathbf{H}_{L,1}(I_{N(T-L)}-Q)\right\|_F
\label{eq:main-bound-02}
\end{align}
and
\begin{align}
\left\|Y_{j,t+1}-\hat{A}_T^tY_{j,1}\right\|&\leq \left\|Y_{j,t+1}-\hat{A}_{T}Y_{j,t}\right\|+\left\|\hat{A}_{T}Y_{j,t}-\hat{A}_T^tY_{j,1}\right\|\nonumber\\
&\leq \mathbb{D}\delta+\sqrt{nL}\left\|\mathbf{H}_{L,1}(I_{N(T-L)}-Q)\right\|_F\nonumber\\
&+\|\hat{A}_T\|_F\left\|Y_{j,t}-\hat{A}_T^{t-1}Y_{j,1}\right\|\nonumber\\
&\cdots\leq \mathbb{E}\delta+\mathbb{F}\left\|\mathbf{H}_{L,1}(I_{N(T-L)}-Q)\right\|_F.
\label{eq:main-bound-03}
\end{align}

Let us set 
\begin{align*}
A_T=\frac{1}{N}\sum_{j=1}^{N}\left(I_L\otimes g_j^\ast\right)\hat{A}_T\left(I_L\otimes g_j\right).
\end{align*}
Since $G_N=\{g_1,\ldots,g_N\}$ is a finite group of unitary matrices and $N=\sharp(G_N)$, we will have that for each $1\leq j,k\leq N$, there is $1\leq l\leq N$ such that
\begin{align*}
I_L\otimes g_j \: I_L\otimes g_k=I_L\otimes g_jg_k=I_L\otimes g_l
\end{align*}
and by elementary group representation theory this implies that for each $1\leq k\leq N$
\begin{align*}
\left(I_L\otimes g_k\right)^\ast A_T\left(I_L\otimes g_k\right)= I_L\otimes g_k^\ast A_T I_L\otimes g_k=A_T.
\end{align*}
Consequently, 
\begin{align}
I_L\otimes g_k A_T=A_T I_L\otimes g_k
\label{eq:cm-rel-1}
\end{align}
for each $g_k\in G_N$, and this implies that $A_T\in \mathbb{S}(nL)^{\tilde{G}_N}$ with $\tilde{G}_N$ defined as in problem \ref{prob:prob-1}.

By definition of $A_T$ we will have that
\begin{align}
\|A_T\|_F&=\left\|\frac{1}{N}\sum_{j=1}^{N}\left(I_L\otimes g_j^\ast\right)\hat{A}_T\left(I_L\otimes g_j\right)\right\|F\nonumber\\
&\leq \frac{1}{N}\sum_{j=1}^N \|\hat{A}_T\|_F=\|\hat{A}_T\|_F
\label{eq:P-ineq}
\end{align}

By definition of $\mathbf{H}_{L,0},\mathbf{H}_{L,1}$ and by \eqref{eq:main-bound-1} and \eqref{eq:cm-rel-1} we will have that
\begin{align*}
\|X_{t+1}-A_TX_t\|&=\left\|\sum_{j=1}^N\frac{1}{N}X_{t+1}-\frac{1}{N}\sum_{j=1}^N I_L\otimes g_j^\ast \hat{A}_T I_L\otimes g_jX_t \right\|\\
&\leq\frac{1}{N}\sum_{j=1}^N \left\|I_L\otimes g_jX_{t+1}-\hat{A}_TI_L\otimes g_jX_t\right\|\\
&\leq\frac{1}{N}\sum_{j=1}^N \left\|\mathbf{H}_{L,1}-\hat{A}_T\mathbf{H}_{L,0}\right\|_F\\
&= \left\|\mathbf{H}_{L,1}-\hat{A}_T\mathbf{H}_{L,0}\right\|_F\\
&\leq \mathbb{D}\delta+\sqrt{nL}\left\|\mathbf{H}_{L,1}(I_{N(T-L)}-Q)\right\|_F\\
\|Y_{j,t+1}-A_TY_{j,t}\|&=\left\|I_L\otimes g_jX_{t+1}-A_TI_L\otimes g_jX_t\right\|\\
&=\left\|I_L\otimes g_j\left(X_{t+1}-A_TX_t\right)\right\|\\
&=\left\|X_{t+1}-A_TX_t\right\|\\
&\leq \mathbb{D}\delta+\sqrt{nL}\left\|\mathbf{H}_{L,1}(I_{N(T-L)}-Q)\right\|_F.
\end{align*}
Consequently, by \eqref{eq:P-ineq} we will have that
\begin{align}
\left\|X_{t+1}-A_T^tX_1\right\|&\leq \left\|X_{t+1}-A_{T}X_{t}\right\|+\left\|A_{T}X_{t}-A_T^tX_1\right\|\nonumber\\
&\leq \mathbb{D}\delta+\sqrt{nL}\left\|\mathbf{H}_{L,1}(I_{N(T-L)}-Q)\right\|_F\nonumber\\
&+\|\hat{A}_T\|_F\left\|X_{t}-A_T^{t-1}X_1\right\|\nonumber\\
&\cdots\leq \mathbb{E}\delta+\mathbb{F}\left\|\mathbf{H}_{L,1}(I_{N(T-L)}-Q)\right\|_F
\label{eq:main-bound-2}
\end{align}
and
\begin{align}
\left\|Y_{j,t+1}-A_T^tY_{j,1}\right\|&\leq \left\|Y_{j,t+1}-A_{T}Y_{j,t}\right\|+\left\|A_{T}Y_{j,t}-A_T^tY_{j,1}\right\|\nonumber\\
&\leq \mathbb{D}\delta+\sqrt{nL}\left\|\mathbf{H}_{L,1}(I_{N(T-L)}-Q)\right\|_F\nonumber\\
&+\|\hat{A}_T\|_F\left\|Y_{j,t}-A_T^{t-1}Y_{j,1}\right\|\nonumber\\
&\cdots\leq \mathbb{E}\delta+\mathbb{F}\left\|\mathbf{H}_{L,1}(I_{N(T-L)}-Q)\right\|_F.
\label{eq:main-bound-3}
\end{align}
Since $\mathscr{T}(x_t)=x_{t+1}$ and since it is clear that $\mathscr{P}_L$ is a partial isometry that satisfies $x_t=\mathscr{P}_LX_t$ and $g_jx_t=\mathscr{P}_L I_L\otimes g_j X_t$ for each $t=1,\ldots,T-L$ and each $g_j\in G_N$, by \eqref{eq:main-bound-2} we will have that
\begin{align*}
\left\|\mathscr{T}(x_t)-\mathscr{P}_L\hat{A}_T^tX_1\right\|&=\left\|\mathscr{P}_L(X_{t+1}-\hat{A}_T^tX_1)\right\|\\
&\leq \left\|X_{t+1}-\hat{A}_T^tX_1\right\|\\
&\leq \mathbb{E}\delta+\mathbb{F}\left\|\mathbf{H}_{L,1}(I_{N(T-L)}-Q)\right\|_F,\\
\left\|\mathscr{T}(x_t)-\mathscr{P}_LA_T^tX_1\right\|&=\left\|\mathscr{P}_L(X_{t+1}-A_T^tX_1)\right\|\\
&\leq \left\|X_{t+1}-A_T^tX_1\right\|\\
&\leq \mathbb{E}\delta+\mathbb{F}\left\|\mathbf{H}_{L,1}(I_{N(T-L)}-Q)\right\|_F
\end{align*}
and by  \eqref{eq:main-bound-3} we will also have that
\begin{align*}
\left\|\mathscr{T}(g_jx_t)-\mathscr{P}_L\hat{A}_T^t (I_L\otimes g_j) X_1\right\|&=\left\|g_j\mathscr{T}(x_t)-\mathscr{P}_L\hat{A}_T^t Y_{j,1}\right\|\\
&=\left\|\mathscr{P}_L(Y_{j,t+1}-\hat{A}_T^tY_{j,1})\right\|\\
&\leq \left\|Y_{j,t+1}-\hat{A}_T^tY_{j,1}\right\|\\
&\leq \mathbb{E}\delta\\
&+\mathbb{F}\left\|\mathbf{H}_{L,1}(I_{N(T-L)}-Q)\right\|_F,\\
\left\|\mathscr{T}(g_jx_t)-\mathscr{P}_LA_T^t (I_L\otimes g_j) X_1\right\|&=\left\|g_j\mathscr{T}(x_t)-\mathscr{P}_LA_T^t Y_{j,1}\right\|\\
&=\left\|\mathscr{P}_L(Y_{j,t+1}-A_T^tY_{j,1})\right\|\\
&\leq \left\|Y_{j,t+1}-A_T^tY_{j,1}\right\|\\
&\leq \mathbb{E}\delta\\
&+\mathbb{F}\left\|\mathbf{H}_{L,1}(I_{N(T-L)}-Q)\right\|_F.
\end{align*}
This completes the proof.
\end{proof}

We can apply theorem \ref{thm:main-thm} for sparse model identification, specially when a sparse predictive model $\hat{\mathscr{T}}$ for a time series $\{x_t\}_{t\geq 1}$ of a system $(\Sigma,\mathscr{T})$ can be estimated for some given time horizon $T>0$, based on a relatively small sample $\Sigma_{S}=\{x_t\}_{t=1}^{S}\subset \{x_t\}_{t\geq 1}$ for some $S \ll T$. Important examples of systems that would satisfy the previous consideration are the periodic and eventually periodic systems, like the ones considered in \cite{FRANKE200364} and \cite{FARHOOD2002417}, respectively.

\subsubsection{An intuitive topological approach to sparse identification of time series models}
\label{section:topological_control}

Let us consider the following set.

\begin{defn}
Given $\delta>0$, a finite group $G_N\subset \mathbb{U}(n)$ with $\sharp(G_N)=N$, and a sample $\Sigma_T=\{x_t\}_{t=1}^{T}$ from a time series $\{x_t\}_{t\geq 1}\subset \mathbb{C}^n$ of a $G_N$-equivariant system $(\Sigma,\mathscr{T})$. We will write $\mathrm{Gr}_{\delta,\Sigma_T}^{G_N}$ to denote the {\em $\delta$-approximate symmetric identification grading set} of the sample $\Sigma_T$ that will be defined by the following expression
\begin{equation}
\mathrm{Gr}_{\delta,\Sigma_T}^{G_N}=\left\{1\leq L\leq \left\lfloor\frac{T+1}{2}\right\rfloor\left|\begin{array}{l}
\mathrm{rk}_\delta\left(\mathbf{H}^{L+1}_{T}\right)=\mathrm{rk}_\delta\left(\mathbf{H}^{L}_{T-1}\right),\\
\mathrm{rk}_\delta\left(\mathbf{H}^{L}_{T-1}\right)>0
\end{array}
\right.\right\}
\label{eq:grad-def}
\end{equation}
with $\mathbf{H}^{L+1}_{T}=\mathscr{H}_{L+1}\left(\Sigma_{T},G_N\right)$ and  $\mathbf{H}^{L}_{T-1}=\mathscr{H}_{L}\left(\{x_t\}_{t=1}^{T-1},G_N\right)$.
\end{defn}

Let us now consider a nonnegative integer defined as follows.

\begin{defn}
Given $\delta>0$, a finite group $G_N\subset \mathbb{U}(n)$ with $\sharp(G_N)=N$, and a sample $\Sigma_T=\{x_t\}_{t=1}^{T}$ from a time series $\{x_t\}_{t\geq 1}\subset \mathbb{C}^n$ of a $G_N$-equivariant system $(\Sigma,\mathscr{T})$. We will write $\mathrm{deg}_{\delta,G_N}(\Sigma_T)$ to denote the {\em $\delta$-approximate symmetric identification degree} of the sample $\Sigma_T$ that will be defined by the following expression.
\begin{equation}
\mathrm{deg}_{\delta,G_N}(\Sigma_T)=\left\{\begin{array}{ll}
\inf \: \mathrm{Gr}_{\delta,\Sigma_T}^{G_N} &, \: \sharp(\mathrm{Gr}_{\delta,\Sigma_T}^{G_N})\neq 0\\
0 &,\: \sharp(\mathrm{Gr}_{\delta,\Sigma_T}^{G_N})= 0
\end{array}
\right.
\label{eq:degree-def}
\end{equation}
\end{defn}

From a topological perspective the approximate symmetric identification degree provides a way to compute invariants that can be used to classify samples from time series data in terms of the models that are computable based on such samples. More specifically, given a finite group $G_N\subset \mathbb{U}(n)$ with $\sharp(G_N)=N$, and a sample $\Sigma_{T}=\{x_t\}_{t=1}^T$ from a time series $\{x_t\}_{t\geq 1}\subset \mathbb{C}^n$ of a $G_N$-equivariant system $(\Sigma,\mathscr{T})$ such that $\mathrm{deg}_{\delta,G_N}(\Sigma_T)=L>0$ for some $\delta>0$, we will have that if we set $\Sigma_{T-1}=\{x_{t}\}_{t=1}^{T-1}$ then
\begin{align}
\mathrm{rk}_\delta\left(\mathscr{H}_{L+1}\left(\Sigma_T,G_N\right)\right)=\mathrm{rk}_\delta\left(\mathscr{H}_{L}\left(\Sigma_{T-1},G_N\right)\right)>0.
\label{eq:rank-cond}
\end{align}
By \eqref{eq:rank-cond} and as a consequence of theorems \ref{thm:MEQ-Solver}, \ref{thm:main-thm} and the ideas implemented in their proofs, we will have that there are orthogonal projectors $P,Q$ such that 
\begin{align}
\mathrm{rk}(P)=\mathrm{rk}(Q)=\mathrm{rk}_\delta\left(\mathscr{H}_{L}\left(\Sigma_{T-1},G_N\right)\right)>0
\label{eq:rank-homot-cond}
\end{align}
and
\begin{align}
&\|P\mathscr{H}_{L}\left(\Sigma_{T-1},G_N\right)^\top-\mathscr{H}_{L}\left(\Sigma_{T-1},G_N\right)^\top\|_F\leq C_1 \delta,\nonumber\\
&\|Q\mathscr{H}_{L+1}\left(\Sigma_T,G_N\right)^\top-\mathscr{H}_{L+1}\left(\Sigma_T,G_N\right)^\top\|_F\leq C_2 \delta,
\label{eq:almost-units-cond}
\end{align}
for some constants $C_1,C_2\geq 0$. By \eqref{eq:rank-homot-cond} we will have that as a consequence of the Schur decomposition theorem, there are unitaries $U_P,U_Q$ such that
\begin{align*}
U_P^\ast PU_P=U_QQU_Q^\ast
\end{align*}
this in turn implies that
\begin{align*}
P=U_PU_QQ(U_PU_Q)^\ast
\end{align*}
and by elementary Lie group theory there is an analytic path of unitaries $U(s)=e^{s\log(U_PU_Q)}$ from the identity matrix $I_{N(T-L)}=U(0)$ to $U_PU_Q=e^{\log(U_PU_Q)}=U(1)$, and this implies that there is an analytic path of orthogonal projectors $Q(s)=U(s)QU(s)^\ast$ from $Q$ to $P$. Consequently, the orthogonal projectors $P$ and $Q$ are homotopic in the sense that there is a homotopy between the corresponding constant maps over $P$ and $Q$, respectively. In addition, by \eqref{eq:embedding-ID-cond-2} and \eqref{eq:almost-units-cond} we will have that $P$ and $Q$ satisfy the following norm constraint.
\begin{align*}
&\|(P-Q)\mathscr{H}_{L}\left(\Sigma_{T-1},G_N\right)^\top\|_F\leq (C_1+C_2) \delta
\end{align*}

Based on the previous considerations, we can observe that the number $\mathrm{drk}_{\delta,G_N}(\Sigma_T)$ defined by the expression
\begin{align*}
\mathrm{drk}_{\delta,G_N}(\Sigma_T)=\mathrm{rk}_\delta\left(\mathscr{H}_{L+1}\left(\Sigma_T,G_N\right)\right)-\mathrm{rk}_\delta\left(\mathscr{H}_{L}\left(\Sigma_{T-1},G_N\right)\right)
\end{align*}
provides a way of measuring potential obstructions for the computability of predictive models based on $\Sigma_T$ that can reach a prediction error $\mathcal{O}(\delta)$, in the sense that it would be necessary for $\mathrm{drk}_{\delta,G_N}(\Sigma_T)$ to be equal to zero in order for the topological obstruction to be removed. A more formal study of these potential topologically controlled obstructions together with their connections with potential model overfitting, will be further studied in future communications.

\begin{ex} As an example of the previous phenomena, let us consider a sample $\Sigma_{257}=\{s_k:1\leq k\leq 257\}$ from a discrete time scalar signal $S=\{s_k:k\in \mathbb{Z}^+\}$ with trivial symmetry group $G_1=\{1\}$, that is determined for each $k\in \mathbb{Z}^+$ by the expression 
\begin{align*}
s_k=\sum_{j=0}^\infty \min\{t_k-j,1-t_k+j\}(H_{j}(t_k)-H_{j+1}(t_k))
\end{align*}
with $t_k=(k-1)/32$. Let us add noise to the sample $\Sigma_{257}$ using a sequence of normally distributed pseudorandom numbers $\mathbf{RN}_{257}=\{r_{k}:|r_k|=\mathcal{O}(1\times 10^{-3}), 1\leq k\leq 257\}$, obtaining a noisy version $\tilde{\Sigma}_{257}=\{s_k+r_k:1\leq k\leq 257\}$ of the original sample $\Sigma_{257}$. 

Let us consider a subsample $\tilde{\Sigma}_{70}=\{s_k+r_k:1\leq k\leq 70\}\subset \tilde{\Sigma}_{257}$. Computing $\mathrm{deg}_{\delta,G_1}(\tilde{\Sigma}_{70})$ with $\delta=1\times 10^{-2}$ with the Matlab program {\tt deg.m} in \cite{CodeVides} we obtain $\mathrm{deg}_{\delta,G_1}(\tilde{\Sigma}_{70})=17$. We can now compute a predictive model 
\begin{align*}
s_{k+1}=c_{1}s_{k}+c_{2}s_{k-1}+\cdots+c_{L}s_{k-L+1}\approx\mathscr{P}_LA_T^k\begin{bmatrix}
s_{1} \\ s_{2} \\ \vdots \\ s_{L}\end{bmatrix}
\end{align*}
for $k\geq L=\mathrm{deg}_{\delta,G_1}(\tilde{\Sigma}_{70})=17$, along the lines of the proof of theorem \ref{thm:main-thm} obtaining the following model
\begin{align*}
s_{k+1}&=1.000036116667133s_{k}-0.998153385290551s_{k-15}\\
&+0.998115019553671s_{k-16}
\end{align*}
for $k\geq 17$. 

The identified signals for different values of the lag parameter $L$ are shown in figure \ref{fig:Example1Signals}.
\begin{figure}[h]
\centering
\includegraphics[scale=.55]{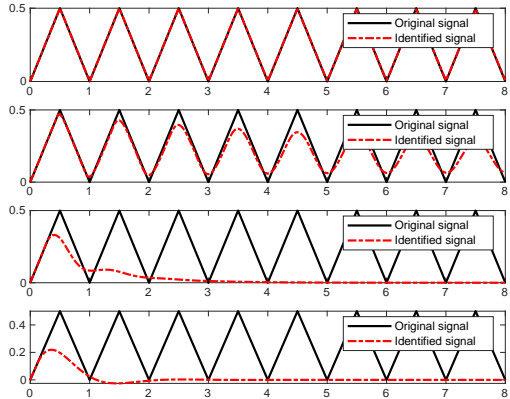}
\caption{Identified signals for different lag values. $L=17$ (top first row). $L=16$ (second row). $L=10$ (third row). $L=5$ (bottom row).}
\label{fig:Example1Signals}
\end{figure}

The $c_k$ coefficients corresponding to the models identified for different values of the lag parameter $L$ are shown in figure \ref{fig:Example1ModelCoefficients}.
\begin{figure}[h]
\centering
\includegraphics[scale=.55]{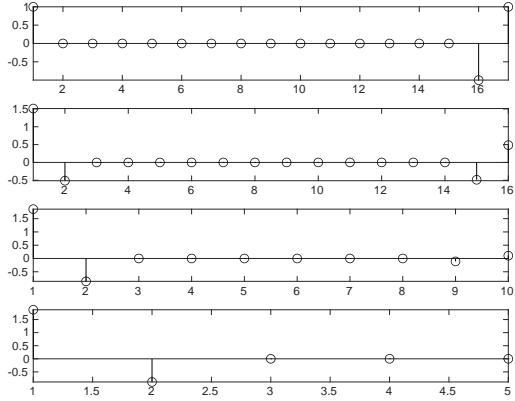}
\caption{Identified coefficients for different lag values. $L=17$ (top first row). $L=16$ (second row). $L=10$ (third row). $L=5$ (bottom).}
\label{fig:Example1ModelCoefficients}
\end{figure}
The root mean square error $RMSE$ corresponding to different values of the lag parameter $L$ are documented in table \ref{table:RMSEtable}.
\begin{center}
 \begin{tabular}{||c | c||} 
 \hline
 Lag value $L$ & RMSE\\ [0.5ex] 
 \hline\hline
 17 &  0.002848195208845\\ 
 \hline\hline 
 16 & 0.067357195429571\\
 \hline
 \hline\hline 
 10 & 0.265912048561782\\
 \hline\hline 
 5 & 0.279715847089058\\
 \hline
 \end{tabular}
\captionof{table}{Root mean square errors corresponding to each lag value.}
\label{table:RMSEtable}
\end{center}
The computational setting used for this example is documented in the program {\tt Example1.m} in \cite{CodeVides}. For each lag value the corresponding model parameters were computed using program {\tt SpSolver.m} in \cite{CodeVides} based on algorithm \ref{alg:main_SLMESolver_alg_1}, with the same tolerance $\delta>0$, in order to expose the potential topologically controlled approximation obstruction identified by the number $\mathrm{drk}_{\delta,G_1}(\tilde{\Sigma}_{70})$.
\end{ex}

As a consequence of theorem \ref{thm:main-thm} and the previous observations, it can be seen that given a finite group $G_N\subset \mathbb{U}(n)$ and a sample $\Sigma_{T}=\{x_t\}_ {t=1}^T$ from a time series $\{x_t\}_{t\geq 1}\subset \mathbb{C}^n$ of a $G_N$-equivariant system $(\Sigma,\mathscr{T})$, if the number $\hat{L}=\mathrm{deg}_{\delta,G_N}(\Sigma_T)$ is positive, we can use this number to estimate a necessary condition for the computability of a sparse solution to the problem
\begin{align*}
\mathbf{H}_{L,0}^\top A_T^\top \approx_\delta \mathbf{H}_{L,1}^\top
\end{align*}
where $\mathbf{H}_{L,k}=\mathscr{H}_{L}\left(\Sigma_{k},G_N\right)$ for $k=0,1$, with $\Sigma_{0}=\{x_t\}_{t=1}^{T-1}$ and $\Sigma_{1}=\{x_t\}_{t=2}^{T}$. In particular, when $\hat{L}=\mathrm{deg}_{\delta,G_N}(\Sigma_T)$ is positive, the lag value $L=\hat{L}$ would provide a good starting point for an adaptive sparse system identification method, if the prediction error is still not small enough for the lag value $L=\hat{L}$, a controlling algorithm can keep increasing the value of the lag parameter until the prediction error is reached or some prescribed bound for the lag value is attained. For the experiments documented in this article, we have used standard scalar signal autocorrelation techniques to estimate admissible bounds for the lag values, in the Sparse Dynamical System Identification ({\bf SDSI}) toolset available in \cite{CodeVides} these ideas are implemented in the Matlab program {\tt LagEstimate.m} based on the Matlab function {\tt xcorr.m}.

The results and ideas presented in this section can be translated into sparse identification algorithms like algorithm \ref{alg:main_alg_1}.

\subsection{Sparse parameter identification for finite difference models}

Let us write $\eth_{h,k}$ to denote the finite difference representation of the time differentiation operator $\partial_t$ with approximation order $k$ and uniform time partition size $h$. Given a dictionary of dynamic variables $u_1,\ldots,u_m$, a function $f:(\mathbb{C}^n)^m\to (\mathbb{C}^n)^p$ and time series samples $\{u_j(k)\}_{k=1}^N\subset \mathbb{C}^n$ corresponding to each dynamic variable, let us write $V_N(f)(\mathbf{u})$ to denote the expression.
\begin{align*}
V_N(f)(\mathbf{u})&=\begin{bmatrix}
f(u_1(1),\ldots,u_m(1))\\
\vdots\\
f(u_1(N),\ldots,u_m(N))
\end{bmatrix}
\end{align*}
In this section, given $\delta>0$ we will consider the approximate sparse identification problems of the form
\begin{align}
\begin{bmatrix}
| &  & |\\
\hat{\eth}_{h,k}^1(\hat{u}_1) & \cdots& \hat{\eth}_{h,k}^m(\hat{u}_m)\\
| &  & |
\end{bmatrix}\approx_\delta\begin{bmatrix}
| &  & |\\
V_N(f_1)(\mathbf{u}) & \cdots & V_N(f_M)(\mathbf{u})\\
| &  & |
\end{bmatrix}\mathbf{C},
\label{eq:dif-model-id-problem}
\end{align}
where $\hat{\eth}_{k,h}^1,\ldots,\hat{\eth}_{k,h}^m$ denote finite difference operators based on $\eth_{k,h}$ with some additional specifications determined by the geometric or physical configuration of the system under study, and for each $1\leq k\leq m$, $\hat{u}_k=V_N(p_k)(\mathbf{u})$ for $p_k$ determined by the expression $p_k(x_1,\ldots,x_m)=x_k$, and where each function $f_j:(\mathbb{C}^n)^{m_j}\to (\mathbb{C}^n)^{p_j}$ is given for $j=1,\ldots,M$.

By applying theorem \ref{thm:MEQ-Solver} to \eqref{eq:dif-model-id-problem} we obtain the following solvability result.

\begin{cor}\label{cor:s-dif-model-id}
Given $\delta>0$ and a problem of the form \eqref{eq:dif-model-id-problem}. If we set 
\begin{align*}
r=\mathrm{rk}_\delta\left(\begin{bmatrix}
| &  & |\\
V_N(f_1)(\mathbf{u}) & \cdots & V_N(f_M)(\mathbf{u})\\
| &  & |
\end{bmatrix}\right)
\end{align*}
and if $r>0$ then, there is a solution $\mathbf{C}$ to the problem \eqref{eq:dif-model-id-problem} with at most $r\sum_{k=1}^Mp_k$ nonzero entries.
\end{cor}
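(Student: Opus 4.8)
The plan is to recognize \eqref{eq:dif-model-id-problem} as a single instance of the matrix equation $AX\approx_\delta Y$ that is already solved by Theorem \ref{thm:MEQ-Solver}, so that the corollary reduces to naming the blocks correctly and invoking that theorem. Concretely, I would set
\[
A=\begin{bmatrix}
| & & |\\
V_N(f_1)(\mathbf{u}) & \cdots & V_N(f_M)(\mathbf{u})\\
| & & |
\end{bmatrix},\qquad
Y=\begin{bmatrix}
| & & |\\
\hat{\eth}_{h,k}^1(\hat{u}_1) & \cdots & \hat{\eth}_{h,k}^m(\hat{u}_m)\\
| & & |
\end{bmatrix},
\]
take $X=\mathbf{C}$ as the unknown, and read \eqref{eq:dif-model-id-problem} as the problem $A\mathbf{C}\approx_\delta Y$ with $A$ the horizontally concatenated dictionary matrix and $Y$ the matrix of finite-difference data (the order of the two sides is irrelevant, since the Frobenius residual $\|A\mathbf{C}-Y\|_F$ is symmetric in its two arguments).

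With this dictionary, the hypothesis of the corollary is by construction identical to the hypothesis of Theorem \ref{thm:MEQ-Solver}: the number $r$ is defined to be $\mathrm{rk}_\delta(A)$ for exactly this $A$, and $r>0$ is assumed. Hence Theorem \ref{thm:MEQ-Solver} applies verbatim and returns a matrix $\mathbf{C}$ together with nonnegative constants $\alpha,\beta$ and an orthogonal projector $Q$ witnessing $\|A\mathbf{C}-Y\|_F\leq \alpha\delta+\beta\|(I-Q)Y\|_F$, i.e. a genuine solution of $A\mathbf{C}\approx_\delta Y$. The sparsity claim then comes from the internal mechanism of Theorem \ref{thm:MEQ-Solver}: each column of the solution is produced by Corollary \ref{cor:SpLSSolver} and carries at most $r$ nonzero entries, so the total number of nonzero entries of $\mathbf{C}$ is bounded by $r$ times the number of solved columns; tracking the block sizes $p_1,\ldots,p_M$ of the dictionary through this count yields the stated bound of at most $r\sum_{k=1}^M p_k$ nonzero entries.

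Because all of the analytic substance---the existence of the low-rank orthogonal projector, the column-selection argument, and the propagation of the $\mathcal{O}(\delta)$ error estimate---is already packaged inside Theorem \ref{thm:MEQ-Solver}, I do not expect any genuine obstacle here. The only step that demands care is the dimensional bookkeeping: one must check that the evaluation maps $V_N(f_j)$ and the finite-difference operators $\hat{\eth}_{h,k}^k$ produce matrices with a common number of rows, so that the products and the Frobenius estimate are well defined, and that the column count of the concatenated dictionary $A$ is accounted for correctly in terms of the block sizes $p_1,\ldots,p_M$. Once these identifications are verified, the conclusion is an immediate specialization of Theorem \ref{thm:MEQ-Solver}, which is exactly why the result is stated as a corollary.
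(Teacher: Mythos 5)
Your proposal matches the paper's proof exactly: the paper's entire argument is the single sentence that the corollary is a direct application of Theorem \ref{thm:MEQ-Solver} to problem \eqref{eq:dif-model-id-problem}, and your identification of $A$ with the concatenated dictionary matrix, $Y$ with the finite-difference data, and $X=\mathbf{C}$ is precisely the intended instantiation. The only point worth noting is that Theorem \ref{thm:MEQ-Solver} actually delivers $r$ times the number of columns of the target block (here $rm$, the number of finite-difference columns), so your closing claim that ``tracking the block sizes'' produces the stated $r\sum_{k=1}^M p_k$ is asserted rather than derived --- but the paper's one-line proof leaves this same accounting implicit.
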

\begin{proof}
This is a direct application of theorem \ref{thm:MEQ-Solver} to problem \eqref{eq:dif-model-id-problem}.
\end{proof}

Once an approximate sparse solution $\mathbf{C}$ to the problem \eqref{eq:dif-model-id-problem} has been computed, one can apply numerical time integration methods to the continuous-time approximate representation of \eqref{eq:dif-model-id-problem} determined by the expression
\begin{align*}
\begin{bmatrix}
| &  & |\\
\partial_t(\hat{u}_1) & \cdots& \partial_t(\hat{u}_m)\\
| &  & |
\end{bmatrix}=\begin{bmatrix}
| &  & |\\
f_1(\mathbf{u}) & \cdots & f_M(\mathbf{u})\\
| &  & |
\end{bmatrix}\mathbf{C},
\end{align*}
with $f_j(\mathbf{u})=f_j(u_1,\dots,u_m)$, in order to obtain a collection $\mathscr{T}_1,\ldots,\mathscr{T}_m$ of transition operators that approximately satisfy the equations:
\begin{align*}
u_j(k+1)=\mathscr{T}_j(u_1(k),\ldots,u_m(k))
\end{align*}
for each $j=1,\ldots,m$, and each discrete time index $k\geq 1$.

\section{Computational Methods}

\subsection{Algorithms}
\label{section_algorithms}

One of the purposes of this project is to provide sparse dynamical system identification ({\bf SDSI}) tools that can be used to build collaborative frameworks of theoretical and computational methods that can be applied in a multidisciplinary context where adaptive approximate system identification is required. An example of the aforementioned collaborative frameworks can be described by the automaton illustrated in figure \ref{alg:main_alg_0}. 
\begin{figure}[!htp]
\begin{center}
\begin{tikzpicture}[->,>=stealth',shorten >=1pt,auto,node distance=2.8cm,
                    semithick]
\node[state, initial] (1) {$D$};
\node[state, below left of=1] (2) {$M$};
\node[state, right of=2] (3) {$P$};
\draw (1) edge[above, bend right] node{$1$} (2)
(1) edge[loop above] node{$0,2$} (1)
(2) edge[loop left] node{$0$} (2)
(2) edge[above, right] node{$2$} (1)
(2) edge[below,bend right] node{$1$} (3)
(3) edge[left,below,above] node{$1$} (2)
(3) edge[above,bend right] node{$2$} (1)
(3) edge[loop right,right] node{$0$} (3);
\end{tikzpicture}
\caption{Finite-state automaton description of generic {\bf SDSI} processes.}
\label{alg:main_alg_0}
\end{center}
\end{figure}
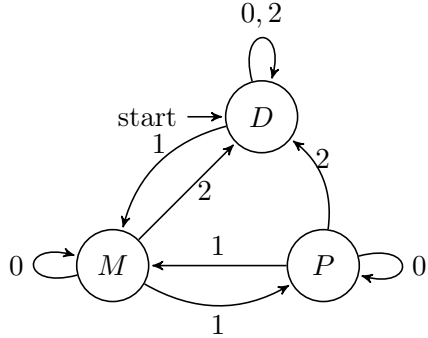

The blocks $D$, $M$ and $P$ of the system \ref{alg:main_alg_0} correspond to the data processing, model computation and predictive simulation stages of a generic sparse system identification process, respectively, while the labels $0$, $1$ and $2$ correspond to the states {\em computation in progress}, {\em computation completed} and {\em more data are required}, respectively.

In this document we focus on the sparse linear least squares solver algorithms and approximate topological invariants in the form of easily computable numbers, that can be used in the modeling block $M$ of the automaton \ref{alg:main_alg_0} for sparse model identification. Among other cases, the control signals for automata like \ref{alg:main_alg_0} can be provided by an expert interested on the dynamics identification of some particular system, or by an artificially intelligent control system designed to build digital twins for a given system or process in some industrial environment. Although the programs in \cite{CodeVides} can be used, adapted or modified to work in any of the two cases previously considered, the programs and examples included as part of the work reported in this document are written with the first case in mind. The artificially intelligent schemes will be further explored in the future.

Although the results in this document focus on sparse model identification, besides the programs corresponding to sparse linear least squares solvers and approximate degree and rank identifiers based on the results in \S\ref{section:linear-solvers} and \S\ref{section_SDSI}, respectively, some programs for data reading and writing, synthetic signals generation, and predictive simulation are also include as part of the {\bf SDSI} toolset available in \cite{CodeVides}.

\subsubsection{Sparse linear least squares solver algorithm}

As an application of the results and ideas presented in \S\ref{section:linear-solvers} one can obtain a prototypical sparse linear least squares solver algorithm like algorithm \ref{alg:main_SLMESolver_alg_1}.

\begin{figure}[!tph]
\begin{algorithm}[H]
\begin{flushleft}
\caption{{\bf SLRSolver}: Sparse linear least squares solver algorithm}
\label{alg:main_SLMESolver_alg_1}
\end{flushleft}
\begin{algorithmic}
\STATE{{\bf Data:}\:\:\: $A\in \mathbb{C}^{m\times n}$, $Y\in \mathbb{C}^{m\times p}$, $\delta>0$, $N\in \mathbb{Z}^+$, $\varepsilon>0$}
\STATE{{\bf Result:}\:\:\: $X=\mathbf{SLRSolver}(A,Y,\delta,N,\varepsilon)$}
\begin{enumerate}
\STATE{Compute economy-sized SVD $USV=A$\;}
\STATE{Set $s=\min\{m,n\}$\;}
\STATE{Set $r=\mathrm{rk}_\delta(A)$\;}
\STATE{Set $U_\delta=\sum_{j=1}^r U\hat{e}_{j,s}\hat{e}_{j,s}^\ast$\;}
\STATE{Set $T_\delta=\sum_{j=1}^r (\hat{e}_{j,s}^\ast S\hat{e}_{j,s})^{-1} \hat{e}_{j,s} \hat{e}_{j,s}^\ast$\;}
\STATE{Set $V_\delta=\sum_{j=1}^r \hat{e}_{j,s}\hat{e}_{j,s}^\ast V$\;}
\STATE{Set $\hat{A}=U_\delta^\ast A$\;}
\STATE{Set $\hat{Y}=U_\delta^\ast Y$\;}
\STATE{Set $X_0=V_\delta^\ast T_\delta \hat{Y}$\;}
\FOR{$j=1,\ldots,p$}
\STATE{Set $K=1$\;} 
\STATE{Set $\mathrm{error}=1+\delta$\;}
\STATE{Set $c=X_0\hat{e}_{j,p}$}
\STATE{Set $x_0=c$}
\STATE{Set $\hat{c}=\begin{bmatrix}
\hat{c}_1 & \cdots & \hat{c}_n
\end{bmatrix}^\top=\begin{bmatrix}
|\hat{e}_{1,n}^\ast c| & \cdots & |\hat{e}_{n,n}^\ast c|
\end{bmatrix}^\top$\;}
\STATE{Compute permutation $\sigma:\{1,\ldots,n\}\to \{1,\ldots,n\}$ such that: $\hat{c}_{\sigma(1)}\geq \hat{c}_{\sigma(2)}\geq \cdots \geq \hat{c}_{\sigma(n)}$}
\STATE{Set $N_0=\max\left\{\sum_{j=1}^n H_\varepsilon\left(\hat{c}_{\sigma(j)}\right),1\right\}$\;}
\WHILE{$K\leq N$ \AND $\mathrm{error}>\delta$} 
\STATE{Set $x=\mathbf{0}_{n,1}$\;}
\STATE{Set $A_0=\sum_{j=1}^{N_0} \hat{A}\hat{e}_{\sigma(j),n}\hat{e}_{j,N_0}^\ast$}
\STATE{Solve $c=\arg\min_{\tilde{c}\in \mathbb{C}^{N_0}}\|A_0\tilde{c}-\hat{Y}\hat{e}_{j,p}\|$\;}
\STATE{
\FOR{$k=1,\ldots,N_0$} 
\STATE{Set $x_{\sigma(k)}=\hat{e}_{k,N_0}^\ast c$\;}
\ENDFOR \;}
\STATE{Set $\mathrm{error}=\|x-x_0\|_\infty$\;}
\STATE{Set $x_0=x$\;}
\STATE{Set $\hat{c}=\begin{bmatrix}
\hat{c}_1 & \cdots & \hat{c}_n
\end{bmatrix}^\top=\begin{bmatrix}
|\hat{e}_{1,n}^\ast x| & \cdots & |\hat{e}_{n,n}^\ast x|
\end{bmatrix}^\top$\;}
\STATE{Compute permutation $\sigma:\{1,\ldots,n\}\to \{1,\ldots,n\}$ such that: $\hat{c}_{\sigma(1)}\geq \hat{c}_{\sigma(2)}\geq \cdots \geq \hat{c}_{\sigma(n)}$}
\STATE{Set $N_0=\max\left\{\sum_{j=1}^n H_\varepsilon\left(\hat{c}_{\sigma(j)}\right),1\right\}$}
\STATE{Set $K=K+1$\;} 
\ENDWHILE
\STATE{Set $x_j=x$\;}
\ENDFOR
\STATE{Set $X=\begin{bmatrix}
| & | &  & |\\
x_1 & x_2 & \cdots & x_p\\
| & | &  & |
\end{bmatrix}$\;}
\end{enumerate}
\RETURN $X$
\end{algorithmic}
\end{algorithm}
\end{figure}

The least squares problems $c=\arg\min_{\hat{c}\in \mathbb{C}^K}\|\hat{A}\hat{c}-y\|$ to be solved as part of the process corresponding to algorithm \ref{alg:main_SLMESolver_alg_1} can be solved with any efficient least squares solver available in the language or program where the sparse linear least squares solver algorithm is implemented. For the Matlab and Julia implementations of algorithm \ref{alg:main_SLMESolver_alg_1} written as part of this research project the {\tt backslash "$\backslash$"} operator is used, and for the Python version of algorithm \ref{alg:main_SLMESolver_alg_1} the function {\tt lstsq} is implemented.

\subsubsection{Sparse time series model identification algorithm}

Given a time series $\{x_t\}_{t\geq 1}$ of a system $(\Sigma,\mathscr{T})$ with transition operator $\mathscr{T}$ to be identified, we can approach the computation of local approximations of $\mathscr{T}$ based on a structured data sample $\Sigma_{T}=\{x_t\}_{t=1}^T\subset \Sigma$ using the prototypical algorithm outlined in algorithm \ref{alg:main_alg_1}.
\begin{figure}[!tph]
\begin{algorithm}[H]
\begin{flushleft}
\caption{{\bf SDSI}: algorithm for sparse dynamical system identification}
\label{alg:main_alg_1}
\end{flushleft}
\begin{algorithmic}
\STATE{{\bf Data:}\:\:\: $G_N\subset\mathbb{U}(n)$, $\hat{\Sigma}_T=\{x_t\}_{t=1}^{T}\subset \mathbb{C}^n$, $L\in \mathbb{Z}^+$, $\delta>0,\varepsilon>0$}
\STATE{{\bf Result:}\:\:\: $(\mathscr{P}_L,\hat{A}_T,A_T,X_1)=\mathbf{SDSI}(G_N,\hat{\Sigma}_T,L,\delta,\varepsilon)$}
\begin{enumerate}
\STATE{Compute $L=\max\{\mathrm{deg}_{\delta,G_N}(\hat{\Sigma}_T),L\}$\;}
\STATE{Set $\hat{\Sigma}_{0}=\{x_t\}_{t=1}^{T-1}$, $\hat{\Sigma}_{1}=\{x_t\}_{t=2}^{T}$\;}
\STATE{Set $\mathbf{H}_{L,k}=\mathscr{H}_{L}(\hat{\Sigma}_k,G_N)$, $k=0,1$\;}
\STATE{Solve $\mathbf{H}_{L,0}^\top C\approx_\delta\mathbf{H}_{L,1}^\top$ applying algorithm \ref{alg:main_SLMESolver_alg_1} with the setting $C=\mathbf{SLRSolver}(\mathbf{H}_{L,0}^\top,\mathbf{H}_{L,1}^\top,\delta,nL,\varepsilon)$\;}
\STATE{Set $\hat{A}_T=C^\top$ \;}
\STATE{Set $A_T=\frac{1}{N}\sum_{j=1}^{N}\left(I_L\otimes g_j^\ast\right)\hat{A}_T\left(I_L\otimes g_j\right)$ \;}
\STATE{Set $X_1=
\left[x_{1}^\top \: \cdots \: x_{L}^\top\right]^\top
$\;}
\STATE{Set $\mathscr{P}_L=\hat{e}_{1,L}^\top\otimes I_n$\;}
\end{enumerate}
\RETURN $(\mathscr{P}_L,\hat{A}_T,A_T,X_1)$
\end{algorithmic}
\end{algorithm}
\end{figure}

For the study reported in this document, when the time series data $\Sigma_{T}=\{x_t\}_{t=1}^T\subset \mathbb{C}^n$ of a given system $(\Sigma,\mathscr{T})$ with transition operator $\mathscr{T}$ to be identified, is not uniformly sampled in time, the sample $\Sigma_{T}$ is preprocessed applying local spline interpolation methods to obtain a uniform in time estimate $\tilde{\Sigma}_{T}=\{\tilde{x}_t\}_{t=1}^T\subset \mathbb{C}^n$ for $\Sigma_{T}$. The Matlab program {\tt DataSpliner.m} is an example of a computational implementation of this interpolation procedure and is included as part of the programs in the {\bf SDSI} toolset available at \cite{CodeVides}.

Given a finite group $G_N\subset \mathbb{U}(n)$, a $G_N$-equivariant system $(\Sigma,\mathscr{T})$ and a structured data sample $\Sigma_T\subset \Sigma$, if the elements $\mathscr{P},\hat{A},A,X_1$ are computed using algorithm \ref{alg:main_alg_1} with the setting $(\mathscr{P},\hat{A},A,X_1)=\mathbf{SDSI}(G_N,\hat{\Sigma}_T,L,\delta,\varepsilon)$ for some suitable $\delta,\varepsilon>0$, one can build two predictive models $\mathbf{P}_{\hat{A}},\mathbf{P}_{A}$ for the time evolution of the system, determined by the recurrence relation $x_{t+1}=\mathscr{T}(x_t), t\geq 1$, using schemes of the form $\mathbf{P}_{\hat{A}}(t)=\mathscr{P}\hat{A}^tX_1$ and $\mathbf{P}_{A}(t)=\mathscr{P}A^tX_1$ for $t\geq 1$.

\subsection{Numerical Simulations}
\label{section_numerics}

In this section we will present some numerical simulations computed using the {\bf SDSI} toolset available in \cite{CodeVides}, that was developed as part of this project, the toolset consists of a collection of programs written in Matlab, Julia and Python that can be used for sparse identification and numerical simulation of dynamical systems. 

The numerical experiments documented in this section were performed with Matlab R2021a (9.10.0.
1602886) 64-bit (glnxa64), Julia 1.6.0, Python 3.8 and Netgen/NGSolve 6.2. Some Matlab and Python Netgen/NGSolve programs were used to generate synthetic data used for some of the system identification processes. All the programs written for synthetic data generation and sparse model identification as part of this project are available at \cite{CodeVides}.

The numerical simulations reported in this section were computed on a Linux Ubuntu Server 20.04 PC equiped with an Intel Xeon E3-1225 v5 (8M Cache, 3.30 GHz) processor and with 40GB RAM.

\subsubsection{Sparse identification of a finite difference model for the nonlinear Schr\"odinger equation in the finite line} \label{exa:example-NLSE}
In this section a finite difference model corresponding to a nonlinear Schr\"odinger equation of the form
\begin{equation}
i\partial_t w +\partial_x^2w+q|w|^2w=0
\label{Schrodinger_eq_def}
\end{equation}
for $x\in [-20,20]$ and $t\geq 0$, will be approximately identified. The synthetic signals corresponding to the csv data file {\tt NLSEqData.csv} that will be used for system identification have been computed using a fourth order Runge-Kutta scheme to integrate the corresponding second order in space finite difference discretization of \eqref{Schrodinger_eq_def}, with initial conditions and homogeneous Dirichlet boundary conditions based on the configuration used by Ramos and Villatoro in \cite{RAMOS199431} using the Matlab program {\tt NLSchrodinger1DRK.m} in \cite{CodeVides}. The amplitudes corresponding to the dynamical behavior data $\Sigma_{4401}=\{\mathbf{w}(t)\}_{t=1}^{4401}\subset \mathbb{C}^{161}$ saved in file {\tt NLSEqData.csv} are visualized in figure \ref{fig:NLSESoliton}.
\begin{figure}[h]
\centering
\includegraphics[scale=.55]{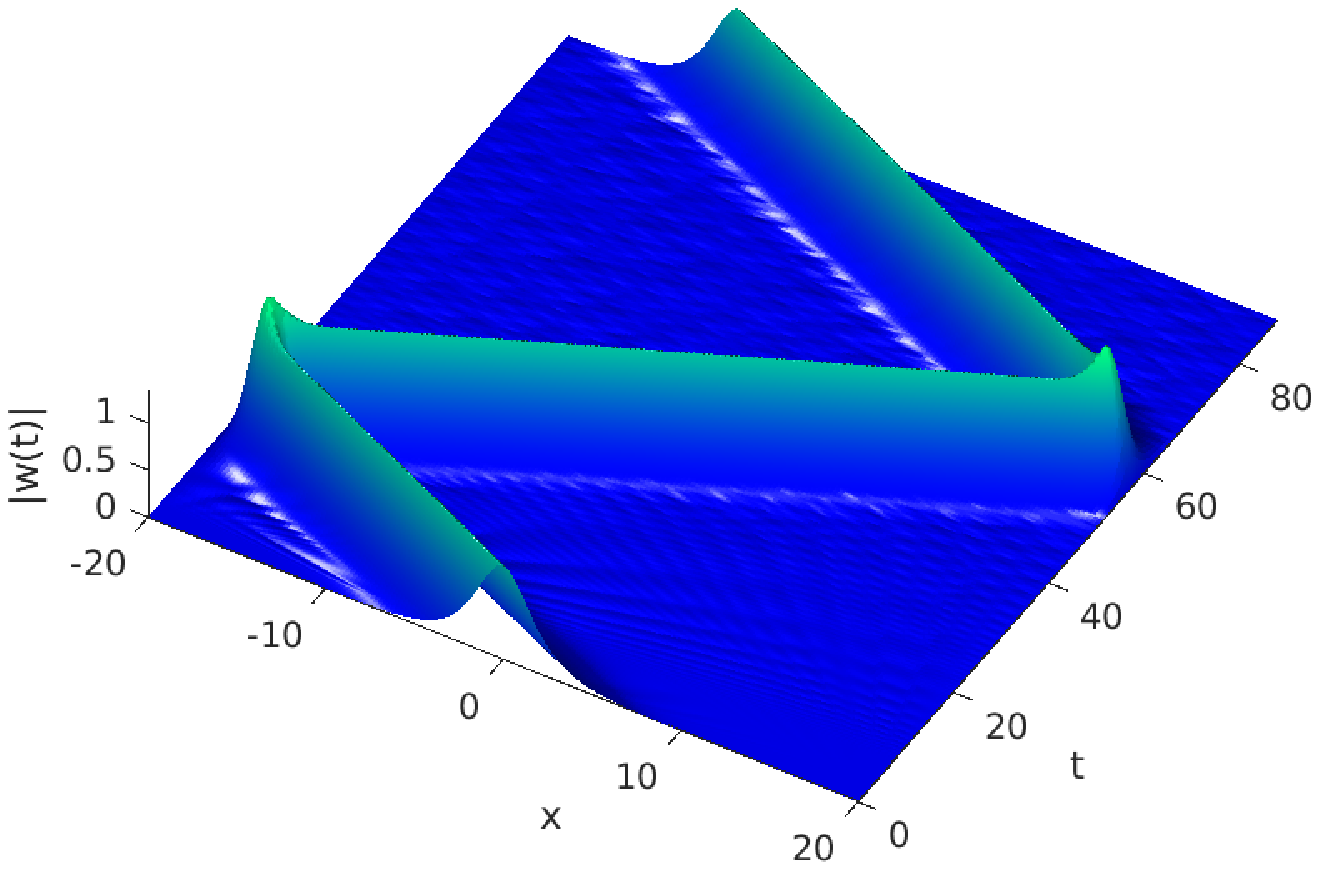}
\caption{Amplitudes corresponding to an approximation of a travelling wave solution to equation \eqref{Schrodinger_eq_def}.}
\label{fig:NLSESoliton}
\end{figure}

Let us consider the finite difference model
\begin{equation}
i\hat{\eth}_{h_t,4}\mathbf{w} +\mathbf{D}_{h_x,2}\mathbf{w}+q|\mathbf{w}|^2\mathbf{w}=0
\label{Schrodinger_eq_def_fd}
\end{equation}
corresponding to \eqref{Schrodinger_eq_def}, where the operation $\hat{\eth}_{h_t,4}\mathbf{w}(t)$ is defined for an arbitrary differentiable function $t\mapsto \mathbf{w}(t)=\begin{bmatrix}w_1(t) & \cdots & w_{161}(t)\end{bmatrix}^\top\in \mathbb{C}^{161}$ by the expression
\begin{align*}
\hat{\eth}_{h_t,4}\mathbf{w}(t)=\begin{bmatrix}
0 & \eth_{h_t,4}w_2(t) & \cdots & \eth_{h_t,4}w_{160}(t) & 0
\end{bmatrix}^\top.
\end{align*}
The choice of fourth order approximations $\eth_{h_t,4}w_k(t)$ of each derivative $\partial_tw_k(t)$ for $2\leq k\leq 160$, is based on the fact that the synthetic data $\Sigma_{4401}$ recorded in {\tt NLSEqData.csv} were computed using a fourth order Runge-Kutta scheme for time integration.

We will apply algorithm \ref{alg:main_SLMESolver_alg_1} along the lines of corollary \ref{cor:s-dif-model-id} to identify model \eqref{Schrodinger_eq_def_fd}, using a dictionary of $203$ functions $f_j:\mathbb{C}^{161}\to \mathbb{C}^{161}$ defined in terms of a generic $\mathbf{u}=\begin{bmatrix}u_j\end{bmatrix}\in \mathbb{C}^{161}$ by the following expressions
\begin{align*}
f_{1}(\mathbf{u})&=\begin{bmatrix}
0 &
u_{2} &
u_{3} &
\cdots &
u_{160}&
0
\end{bmatrix}^\top,\\
f_{2}(\mathbf{u})&=\begin{bmatrix}
0 &
0 &
u_{2} &
u_{3} &
\cdots &
u_{159}&
0
\end{bmatrix}^\top\\
f_{3}(\mathbf{u})&=\begin{bmatrix}
0 &
u_{3} &
u_{4} &
\cdots &
u_{160} &
0 &
0
\end{bmatrix}^\top\\
f_{4}(\mathbf{u})&=\begin{bmatrix}
0 &
|u_{2}|u_{2} &
|u_{3}|u_{3} &
\cdots &
|u_{160}|u_{160}&
0
\end{bmatrix}^\top,\\
f_{5}(\mathbf{u})&=\begin{bmatrix}
0&
|u_{2}|^2u_{2} &
|u_{3}|^2u_{3} &
\cdots &
|u_{160}|^2u_{160}&
0
\end{bmatrix}^\top,\\
\vdots\\
f_{203}(\mathbf{u})&=\begin{bmatrix}
0&
|u_{2}|^{200}u_{2} &
|u_{3}|^{200}u_{3} &
\cdots &
|u_{160}|^{200}u_{160}&
0
\end{bmatrix}^\top.
\end{align*}

Since the synthetic data $\Sigma_{4401}$ was generated using Runge-Kutta approximation of a second order finite difference discretization of \eqref{Schrodinger_eq_def}, there is some numerical noise induced in the synthetic signal due to floating point errors, in addition, for this experiment some pseudorandom noise with $\mathcal{O}(10^{-6})$ is added to the reference data $\Sigma_{4401}$ to obtain a noisier version $\tilde{\Sigma}_{4401}$ of $\Sigma_{4401}$ that has been recorded in \cite{CodeVides} as {\tt NoisyNLSEqData.csv} for future references. 

Some reference data corresponding to the amplitude $|w|$ of $w$ together with the predictions computed with the model that has been identified using the sparse solver of the SDSI toolset are visualized in figure \ref{fig:NLSEPredictedAmplitudes}.
\begin{figure}[h]
\centering
\includegraphics[scale=.55]{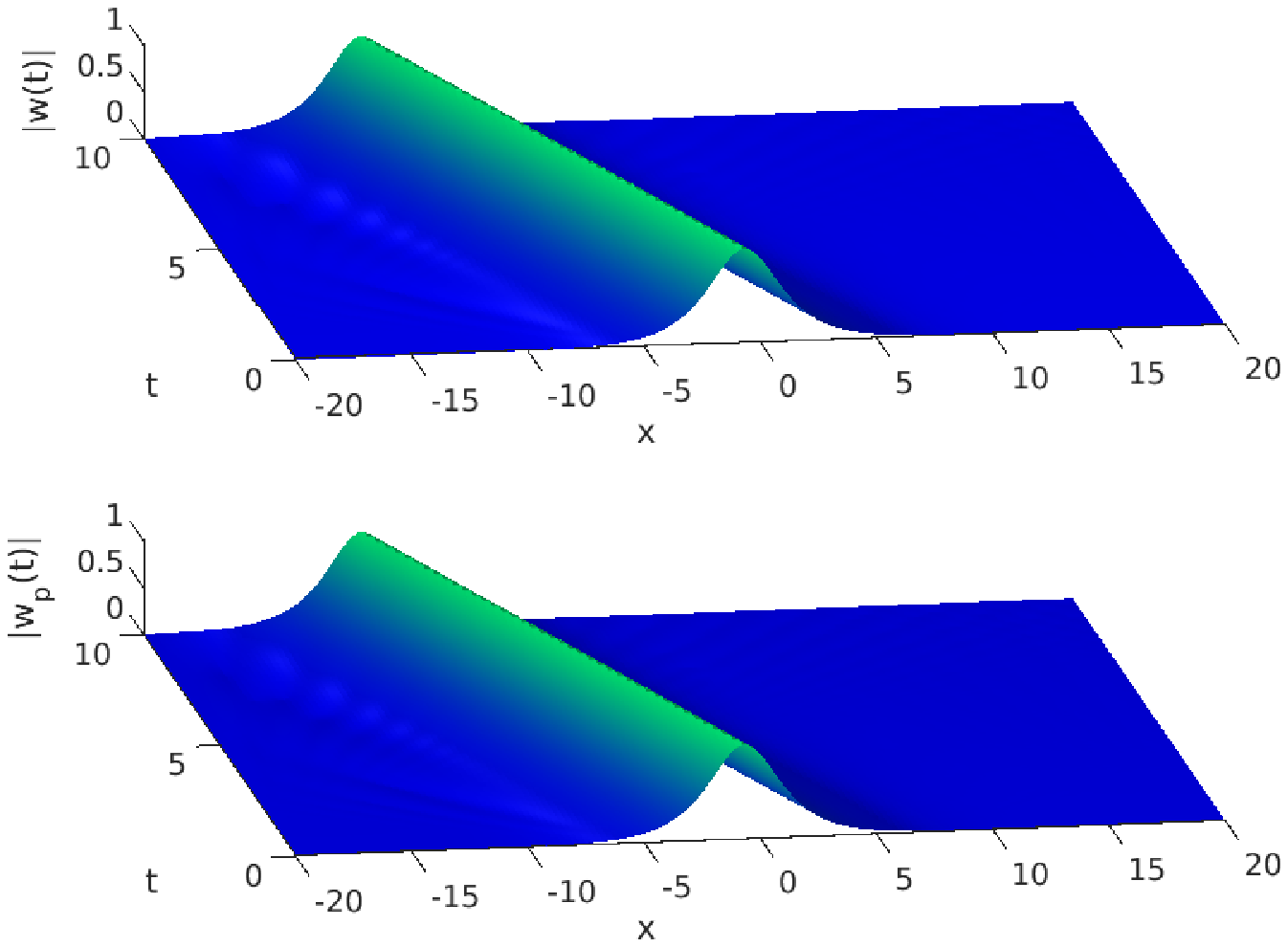}
\caption{Reference data amplitudes $|w(t)|$ (top). Predicted values of the amplitudes $|w_p(t)|$ computed with SDSI model identifier (bottom).}
\label{fig:NLSEPredictedAmplitudes}
\end{figure}

The sparse identification of the signal data $\Sigma_{4401}$ corresponding to the discretization \eqref{Schrodinger_eq_def_fd} of \eqref{Schrodinger_eq_def} was also computed using standard SINDy and Douglas-Rachford sparse solvers as presented and implemented in \cite{BruntonSINDy} and \cite{StructuredDouglasRachfordID}. The absolute prediction errors corresponding to the SDSI, SINDy and Douglas-Rachford solvers in the $\ell_\infty$-norm are shown in figure \ref{fig:NLSEPredictionErrors}.
\begin{figure}[h]
\centering
\includegraphics[scale=.55]{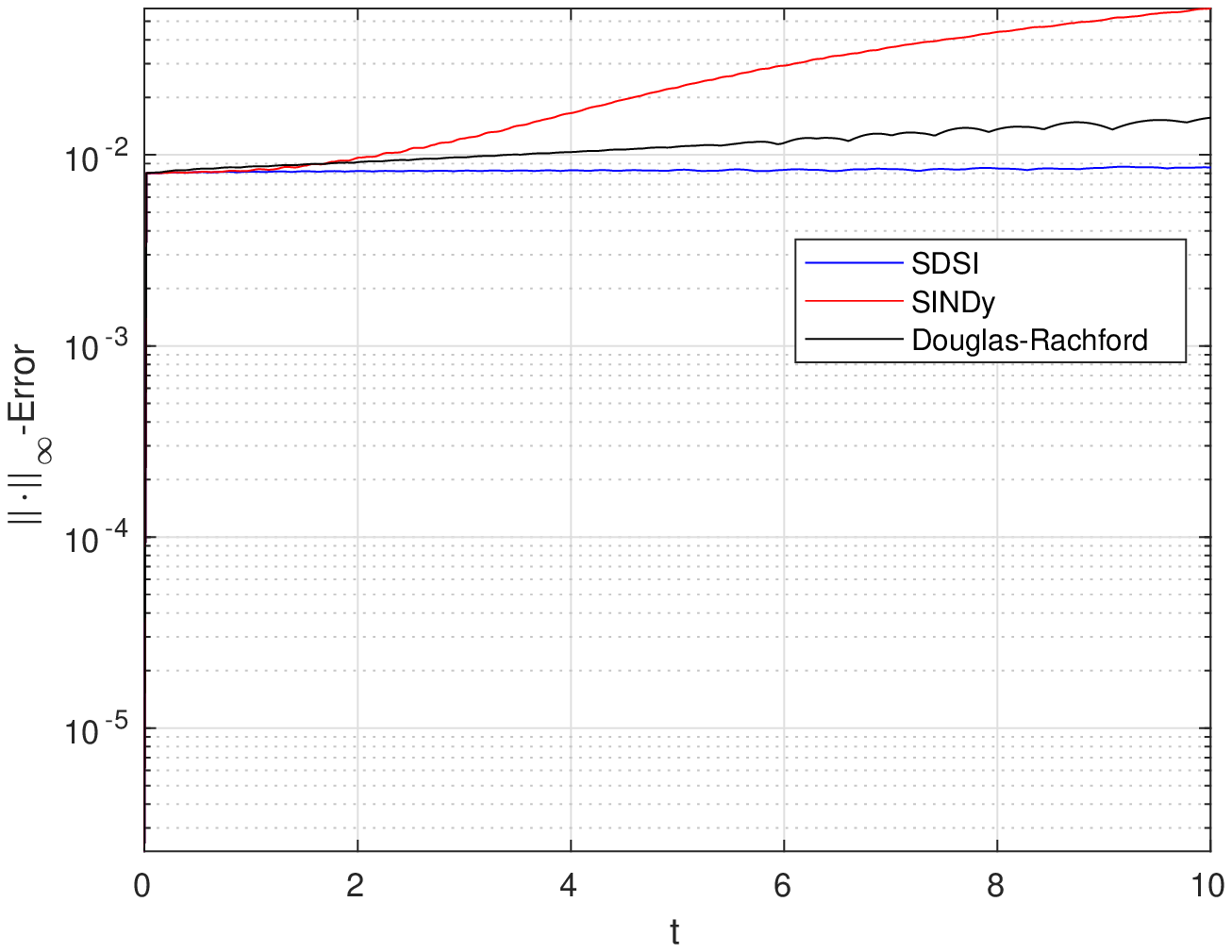}
\caption{Prediction errors in the $\ell_\infty$-norm.}
\label{fig:NLSEPredictionErrors}
\end{figure}
The running times are documented in the table \ref{table:RTtable}.
\begin{center}
 \begin{tabular}{||c | c||} 
 \hline
 Method & Running Time (seconds)\\ [0.5ex] 
 \hline\hline
 SDSI &  0.122988\\ 
 \hline\hline 
 SINDy & 0.258558\\
 \hline 
 \hline\hline 
 Douglas-Rachford & 286.864170\\
 \hline
 \end{tabular}
\captionof{table}{Running time of the sparse identification methods}
\label{table:RTtable}
\end{center}
Let us write $w_{k}$ to denote the $k$-component of the state vector $\mathbf{w}(t)\in \mathbb{C}^{161}$ determined by the expression $w_k=w(-20+(k-1)h_x,th)$ for $h_x=1/4$ and $h=1/10$. The corresponding semi-discrete model will be
\begin{align*}
i\dot{w}_{k}&=-32w_{k}+16w_{k+1}+16w_{k-1}+|w_k|^2w_k, 2\leq k\leq 160
\end{align*}
with $w_1=w_{161}=0$. The semi-discrete models identified by each method based on a training set of $35$ samples $\tilde{\Sigma}_{35}=\{\tilde{\mathbf{w}}(1),\ldots,\tilde{\mathbf{w}}(35)\}\subset \tilde{\Sigma}_{4401}$ for each $2\leq k\leq 160$ are documented in table \ref{table:NLSEIDModels}, for every model in the table $w_1=w_{161}=0$.
\begin{center}
 \begin{tabular}{||c | c||} 
 \hline
 Method & Indentified Model\\ [0.5ex] 
 \hline\hline
 SDSI &  $\!\begin{aligned}
 i\dot{w}_{k}&=(-31.8766 - 0.0276i)w_{k}+(15.9414 + 0.0211i)w_{k+1}\\
 &+(15.9381 + 0.0059i)w_{k-1}+(0.9962 + 0.0008i)|w_k|^2w_k
\end{aligned}$\\ 
 \hline\hline 
 SINDy & $\!\begin{aligned}
 i\dot{w}_{k}&=(-31.8705 - 0.0646i)w_{k}+(15.9408 + 0.0390i)w_{k+1}\\
 &+(15.9330 + 0.0231i)w_{k-1}+(-0.0340 + 0.1333i)|w_k|w_k\\
 &+(2.8996 - 6.5725i)|w_k|^2w_k+(-52.3400 + 160.4634i)|w_k|^3w_k\\
 &+(790.9174 - 2.2437\times 10^3)|w_k|^4w_k+ (25 \:\: \mathrm{more} \:\: \mathrm{terms})
\end{aligned}$\\
 \hline
 \hline\hline 
 Douglas-Rachford & $\!\begin{aligned}
 i\dot{w}_{k}&=(-29.0860 - 0.0599i)w_{k}+(14.5809 + 0.2070i)w_{k+1}\\
 &+(14.5735 - 0.1480i)w_{k-1}+(0.9042 + 0.0012i)|w_k|^2w_k\\
 &+(0.0045 - 0.0002i)|w_k|^3w_k
\end{aligned}$\\
 \hline
 \end{tabular}
\captionof{table}{Model Indentified by each method}
\label{table:NLSEIDModels}
\end{center}

The computational setting used for this experiment is documented in the Matlab program \\
{\tt NLSESpModelID.m} in \cite{CodeVides} that can be used to replicate this experiment.

\subsubsection{Sparse identification of a network of Duffing oscillators with symmetries}
In this section a network of three coupled Duffing oscillators with the following configuration
\begin{align}
&\dot{x}_i=y_i, \:\: i\in \{1,2,3\}\nonumber\\
&\dot{y}_i=\sigma y_j-x_i(\beta+\alpha^2 x_i)+\sum_{ij} \eta_{ij}(x_i-x_j), \:\: i\in \{1,2,3\} \nonumber\\
&x_1(0)=8,x_2(0)=7,x_3(0)=4,\nonumber\\
&y_1(0)=15,y_2(0)=14,y_3(0)=9
\label{eq:DuffingModel}
\end{align}
is identified, for $\alpha=1,\beta=-36,\sigma=0$ and with all the coupling strengths $\eta_{ij}$ equal to $1/5$. The configuration of these experiment is based on the example II.2 considered in \cite{ChaosEqKoopman}, an important part of the motivation for the study of this types of networks of oscillators comes from interesting applications in engineering and biological cybernetics like the ones presented in \cite{GroupsAndNLOscillators}, \cite{SymmetriesMLNerworks} and \cite{GaitsSymmetries}. Since all the coupling strengths $\eta_{ij}$ are equal to $1/5$, as stablished in \cite{ChaosEqKoopman} the system \eqref{eq:DuffingModel} will be $D_3$-equivariant, and for the configuration used for this experiment, the matrix representation of the corresponding group of symmetries $D_3=\langle r,\kappa| r^3=\kappa^2=e,\kappa r\kappa=r^{-1}\rangle$ is determined by the following assignments.
\begin{align*}
r&\mapsto r_\rho =I_2\otimes \begin{bmatrix}
0 & 1 & 0\\
0 & 0 & 1\\
1 & 0 & 0
\end{bmatrix}\\
\kappa&\mapsto \kappa_\rho= I_2\otimes \begin{bmatrix}
1 & 0 & 0\\
0 & 0 & 1\\
0 & 1 & 0
\end{bmatrix}
\end{align*}
The synthetic signal used for sparse identification of \eqref{eq:DuffingModel} was computed with an adaptive fourth order Runge-Kutta scheme. The model was trained using the dictionary
\begin{align*}
f_1(x_1,x_2,x_3,y_1,y_2,y_3)&=(x_1,x_2,x_3,y_1,y_2,y_3)\\
f_2(x_1,x_2,x_3,y_1,y_2,y_3)&=(x_1^2,x_2^2,x_3^2)\\
f_3(x_1,x_2,x_3,y_1,y_2,y_3)&=(x_1^3,x_2^3,x_3^3)\\
\vdots\\
f_9(x_1,x_2,x_3,y_1,y_2,y_3)&=(x_1^9,x_2^9,x_3^9)
\end{align*}
with $20\%$ of the synthetic reference data.

The reference synthetic signal and the corresponding identified signals are illustrated in figure \ref{fig:NLOSignals}
\begin{figure}[h]
\centering
\includegraphics[scale=.6]{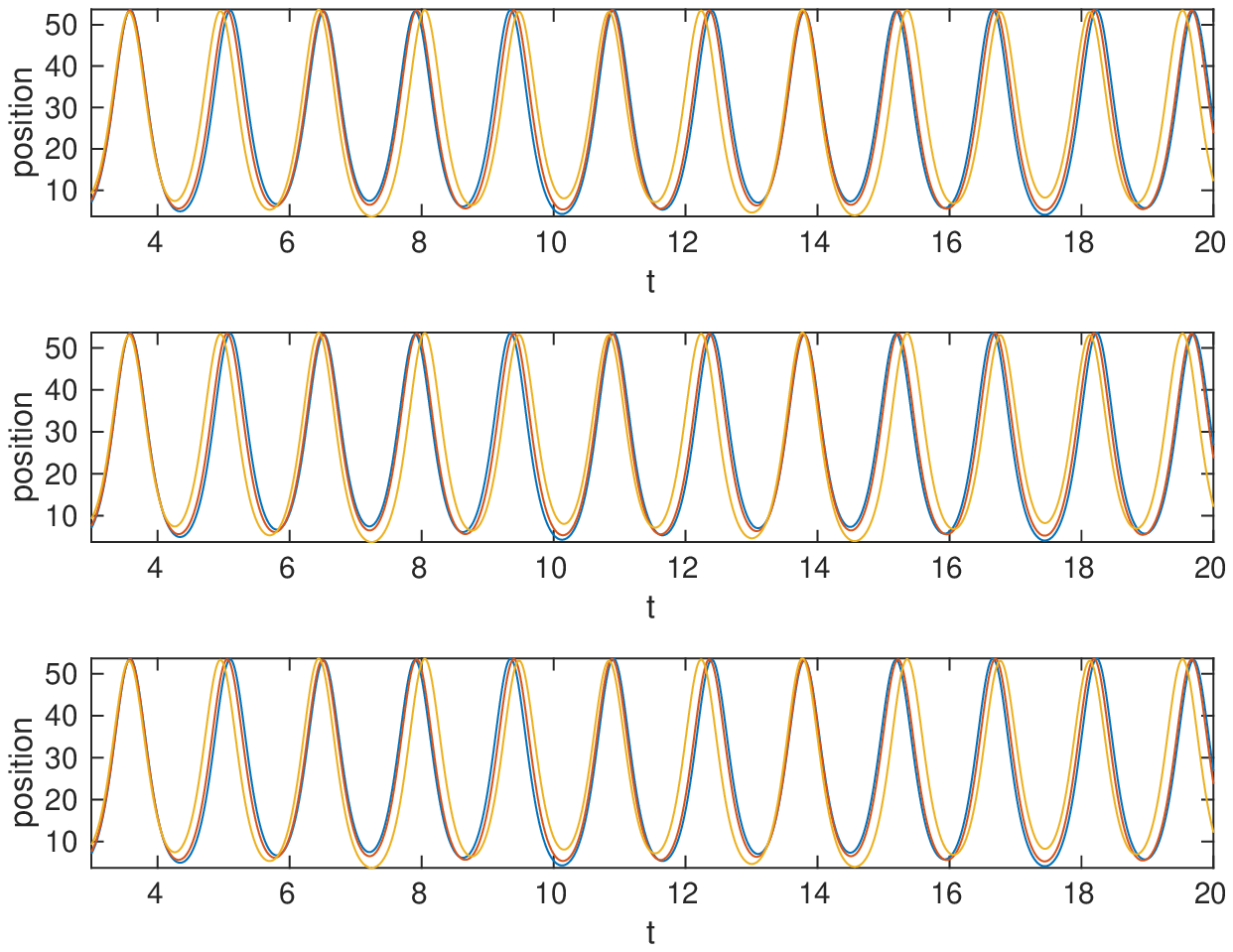}
\caption{Synthetic reference position signals (top). Signals identified using SDSI sparse solver (middle). Signals identified using SINDy sparse solver (bottom).}
\label{fig:NLOSignals}
\end{figure}

The prediction errors $\|\mathbf{x}(t)-\mathbf{x}_p(t)\|$, and the equivariance errors $\|F(r_\rho\mathbf{x}(t))-r_\rho F(\mathbf{x}(t))\|$ and $\|F(\kappa_\rho\mathbf{x}(t))-\kappa_\rho F(\mathbf{x}(t))\|$ of the identified right hand side $F$ for a model $\dot{x}=F(x)$ of the form \eqref{eq:DuffingModel}, for each discrete time index $t$ are plotted in figure \ref{fig:NLOErrors}.
\begin{figure}[h]
\centering
\includegraphics[scale=.6]{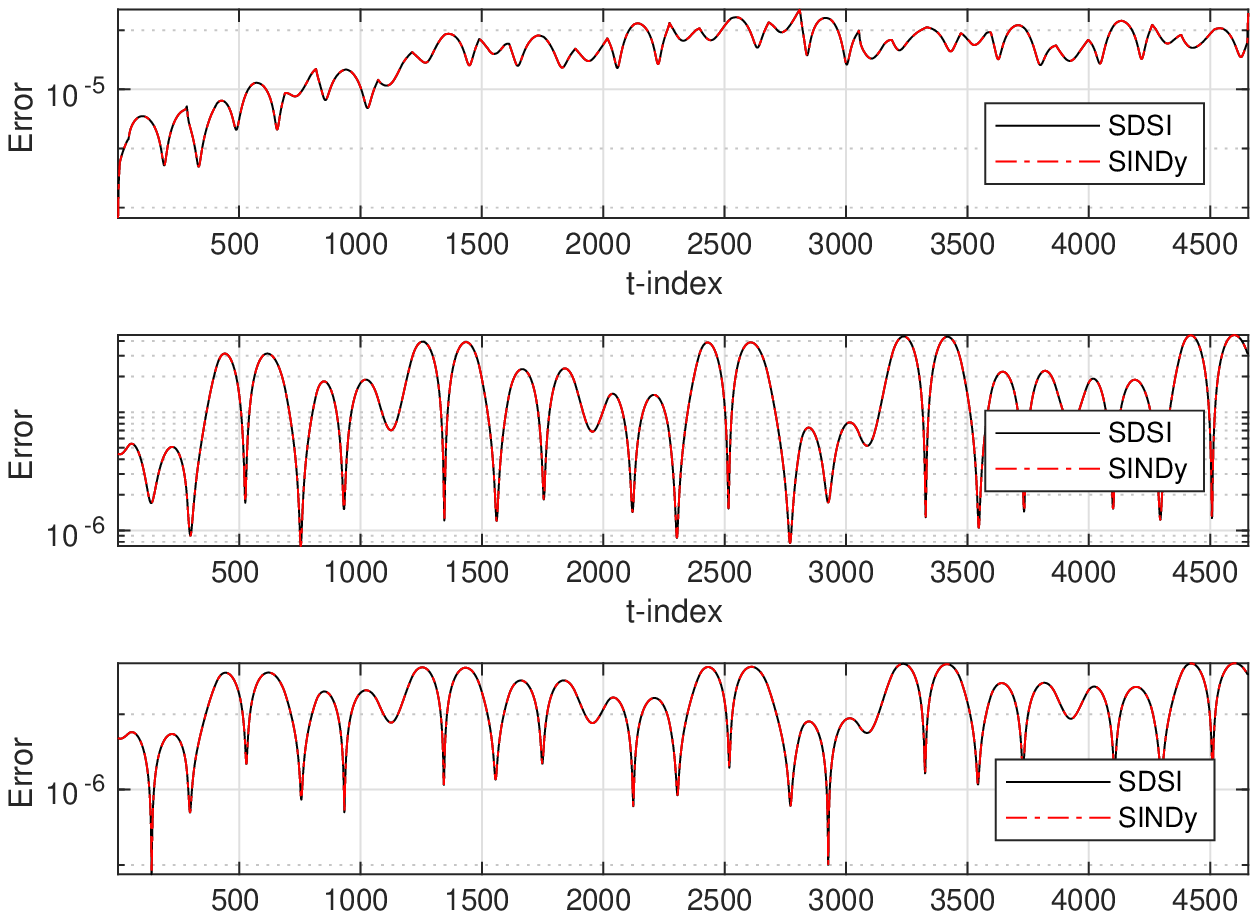}
\caption{Prediction errors (top). Equivariance errors for $r_\rho$ (middle). Equivariance errors for $\kappa_\rho$ (bottom).}
\label{fig:NLOErrors}
\end{figure}

The models identified by each method are documented in tables \ref{table:NLOIDModels-1} and \ref{table:NLOIDModels-2}.

\begin{center}
 \begin{tabular}{||c | c||} 
 \hline
 Method & Indentified Model\\ [0.5ex] 
 \hline\hline
 SDSI &  $\!\begin{aligned}
\dot{x}_1&=0.999999992419860y_1,\\
\dot{x}_2&=0.999999992358200y_2,\\
\dot{x}_3&=0.999999992182884y_3,\\
\dot{y}_1&=36.399999678455494x_1-0.200001381665299x_2-0.199999513388808x_3\\
&-0.999999969696376x_1^2,\\
\dot{y}_2&=-0.199999469429072x_1+36.399998021632314x_2-0.199999719964769x_3\\
&-0.999999971061446x_2^2,\\
\dot{y}_3&=-0.199998908401240x_1-0.200001498693973x_2+36.399999282587004x_3\\
&-0.999999971862063x_3^2,
\end{aligned}$\\ 
 \hline\hline 
 \end{tabular}
\captionof{table}{Model Indentified by SDSI.}
\label{table:NLOIDModels-1}
\end{center}

\begin{center}
 \begin{tabular}{||c | c||} 
 \hline
 Method & Indentified Model\\ [0.5ex] 
 \hline\hline
 SINDy &  $\!\begin{aligned}
\dot{x}_1&=0.999999992419860y_1,\\
 \dot{x}_2&=0.999999992358200y_2, \\
 \dot{x}_3&=0.999999992182883y_3,\\
\dot{y}_1&=36.399999678455423x_1-0.200001381665187x_2-0.199999513388848x_3\\
&-0.999999969696376x_1^2,\\
\dot{y}_2&=-0.199999469429017x_1+36.399998021632094x_2-0.199999719964673x_3\\
&-0.999999971061444x_2^2,\\
\dot{y}_3&=-0.199998908401247x_1-0.200001498693980x_2+36.399999282587018x_3\\
&-0.999999971862063x_3^2,
\end{aligned}$\\
 \hline
 \end{tabular}
\captionof{table}{Model Indentified by SINDy.}
\label{table:NLOIDModels-2}
\end{center}

The running times for the computation of each model are documented in table \ref{table:NLModelRTimes}.
\begin{center}
 \begin{tabular}{||c | c||} 
 \hline
 Method & Running Time (seconds)\\ [0.5ex] 
 \hline\hline
 SDSI &  $0.002407$ \\ 
 \hline\hline 
 SINDy & $0.001376$\\
 \hline
 \end{tabular}
\captionof{table}{Running times for the computation of the nonlinear model identifications.}
\label{table:NLModelRTimes}
\end{center}

Using the same data we can apply algorithm \ref{alg:main_alg_1} to compute a local linear model approximant for the model \ref{eq:DuffingModel}. The identified signals computed using SDSI sparse solver are shown in figure \ref{fig:NLOLTISDSI} and the 
identified signals computed using SINDy sparse solver are shown in figure \ref{fig:NLOLTISINDy}.
\begin{figure}[h]
\centering
\includegraphics[scale=.6]{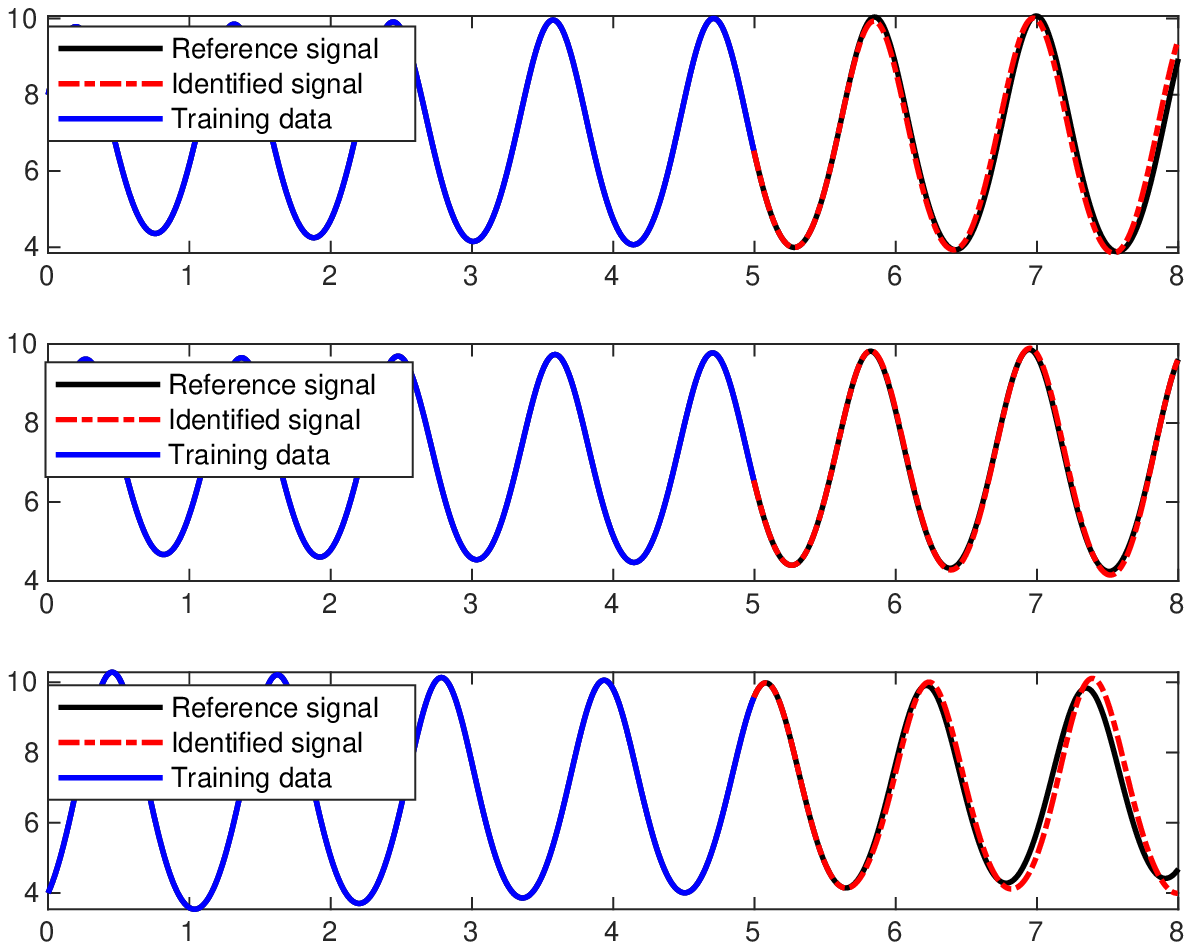}
\caption{Signal identification computed with SDSI for: $x_1$ (top). $x_2$ (middle). $x_3$ (bottom).}
\label{fig:NLOLTISDSI}
\end{figure}

\begin{figure}[h]
\centering
\includegraphics[scale=.6]{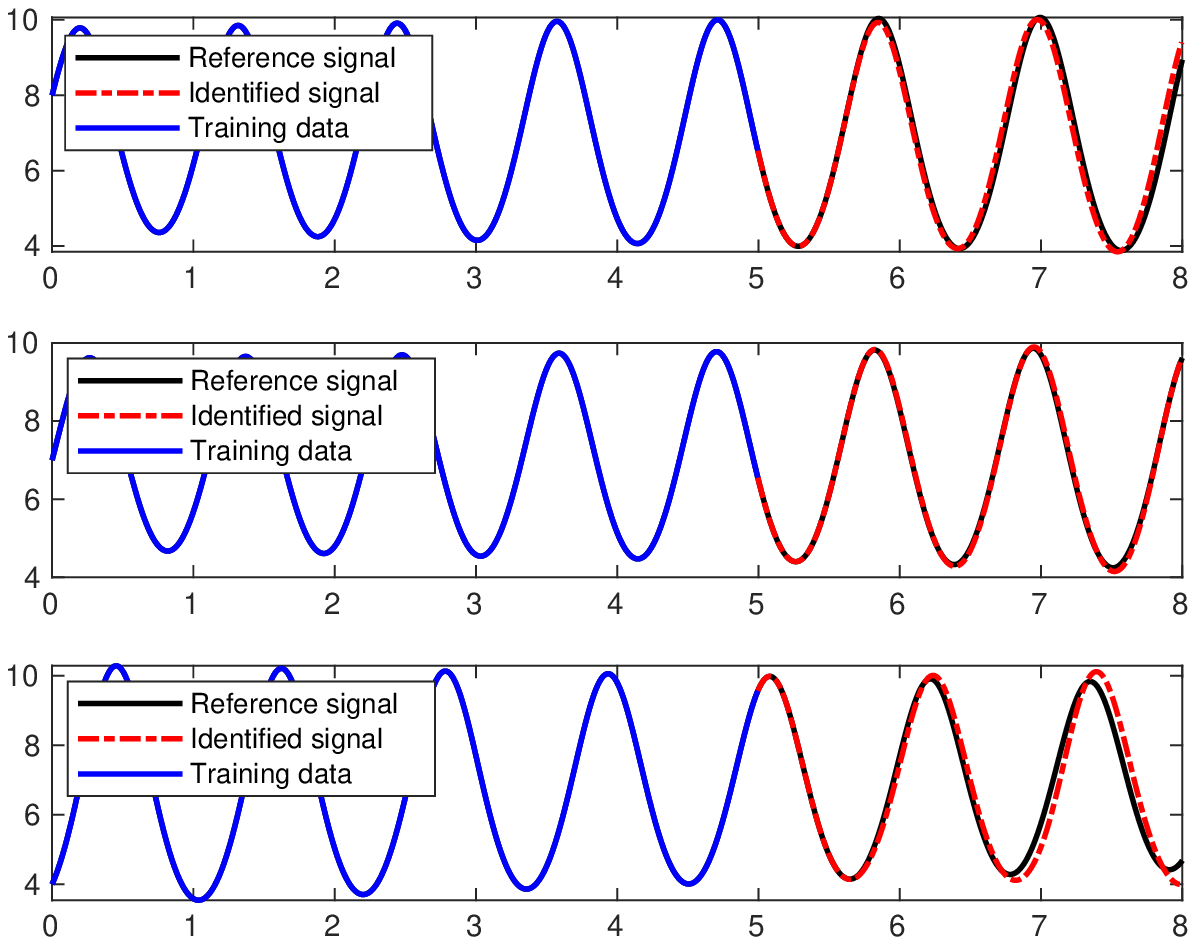}
\caption{Signal identification computed with SINDy for: $x_1$ (top). $x_2$ (middle). $x_3$ (bottom).}
\label{fig:NLOLTISINDy}
\end{figure}

The sparsity patterns of the matrices of parameters identified by each method are shown in figure \ref{fig:NLOMatrixID}.

\begin{figure}[h]
\centering
\includegraphics[scale=.6]{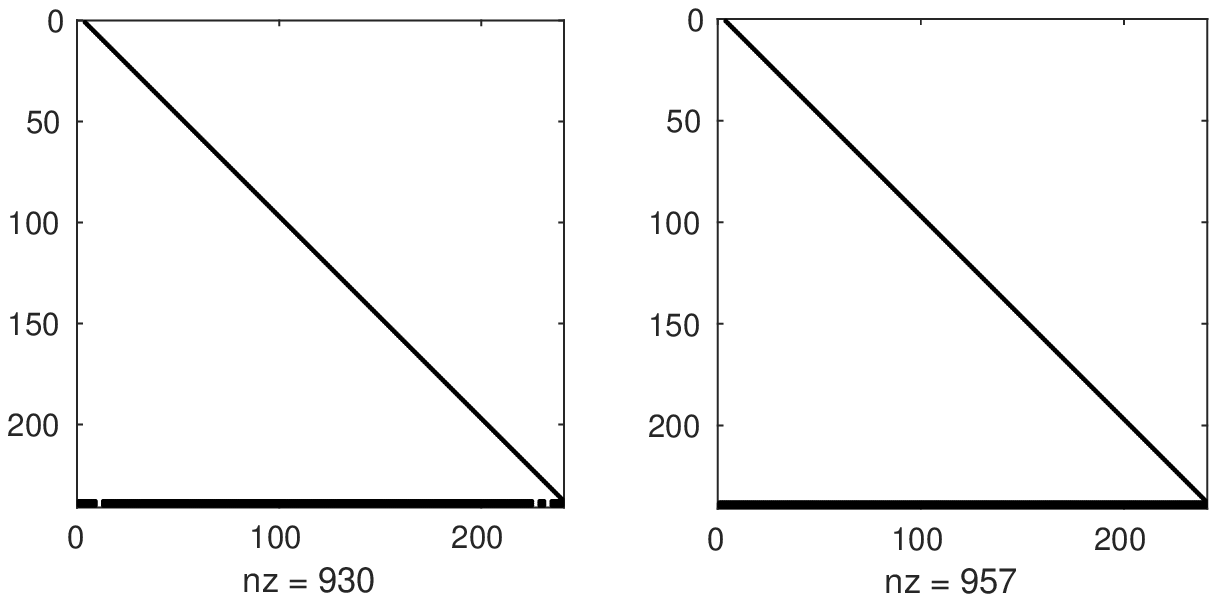}
\caption{Sparsity pattern of the matrix of parameters identified by: SDSI (left). SINDy (right).}
\label{fig:NLOMatrixID}
\end{figure}

Let us set $\hat{r}_\rho=I_L\otimes r_\rho$ and $\hat{\kappa}_\rho=I_L\otimes \kappa_\rho$. The numerical errors corresponding to the symmetry constraints imposed to the matrices of parameters are documented in table \ref{table:LTIErrorstable}.
\begin{center}
 \begin{tabular}{||c | c| c||} 
 \hline
 Method & $\|\hat{r}_\rho A_T-A_T\hat{r}_\rho\|_F$ & $\|\hat{\kappa}_\rho A_T-A_T\hat{\kappa}_\rho\|_F$ \\ [0.5ex] 
 \hline\hline
 SDSI &  $6.311151928321685\times 10^{-16}$ & $0$ \\ 
 \hline\hline 
 SINDy & $9.138315630890346\times 10^{-16}$ & $0$\\
 \hline
 \end{tabular}
\captionof{table}{Numerical errors corresponding to the symmetry preserving constraints.}
\label{table:LTIErrorstable}
\end{center}

The root mean square errors corresponding to the variables considered for the local linear model approximants are documented in table \ref{table:LTIRMSE}, and the running times corresponding to the computation of the local linear model approximants are documented in table \ref{table:LTIRTimes}.

\begin{center}
 \begin{tabular}{||c | c| c||} 
 \hline
 Variable & RMSE (SDSI) & RMSE (SINDy) \\ [0.5ex] 
 \hline\hline
 $x_1$ &  $0.153492169453791$ & $0.154497134958370$ \\ 
 \hline\hline 
 $x_2$ & $0.032257712638561$ & $0.032739961411393$\\
 \hline\hline 
 $x_3$ & $0.227124393816712$ & $0.227994328981342$\\
 \hline
 \end{tabular}
\captionof{table}{Root mean square errors corresponding to each method and variable.}
\label{table:LTIRMSE}
\end{center}

\begin{center}
 \begin{tabular}{||c | c||} 
 \hline
 Method & Running Time (seconds)\\ [0.5ex] 
 \hline\hline
 SDSI &  $0.095987$ \\ 
 \hline\hline 
 SINDy & $0.420293$\\
 \hline
 \end{tabular}
\captionof{table}{Running times for the computation of the local linear model approximant.}
\label{table:LTIRTimes}
\end{center}

The computational setting used for the experiments performed in this section is documented in the Matlab programs {\tt NLONetworkID.m} and {\tt DuffingLTIModelID.m} in \cite{CodeVides} that can be used to replicate these experiments.

\subsubsection{Sparse identification of a time series model for vortex shedding processes}
Let us consider a Navier-Stokes nonlinear model of the form
\begin{align}
\frac{\partial {u}}{\partial{t}} +u\cdot\nabla u-\nu \Delta u+\nabla p&=0\nonumber \\
 \nabla\cdot u&=0 
\label{Navier_Stokes_eq_def}
\end{align}
under suitable geometric configuration, initial and boundary conditions that lead to vortex shedding. For this experiment, the sparse model identification process is based on the synthetic data corresponding to a vortex shedding process recorded in the file {\tt GFUdata.csv}, that is included in \cite{CodeVides} as the compressed file {\tt GFUdata.zip}, the synthetic data were generated using the program {\tt navierstokes-tcsi.py} included in \cite{CodeVides} that is based on the Netgen/Python program {\tt navierstokes.py} developed by J. Sch{\"o}berl as part of the work initiated with \cite{NETGEN}.

The time series model identification based on the data $\Sigma_{502}=\{u_t\}_{t=1}^{502}\subset \mathbb{R}^{53770}$ recorded in {\tt GFUdata.csv} is performed with a computational implementation of algorithm \ref{alg:main_alg_1} using the Matlab program {\tt NSIdentifier.m} which estimates the number $\mathrm{deg}_{9\times 10^{-7},\{I_{53770}\}}(\Sigma_{400})$ obtaining the value $\mathrm{deg}_{9\times 10^{-7},\{I_{53770}\}}(\Sigma_{400})=1$, for this experiment we have that $\mathrm{rk}_{\delta}(\mathscr{H}_1(\Sigma_{400}))=354$, and the models are trained with the subsample $\Sigma_{354}=\{u_t\}_{t=1}^{354}\subset \Sigma_{400}\subset \Sigma_{502}$.

Since $d=1$, the models can be computed using the reduced form 
\begin{align*}
u_{k+1}=\mathscr{H}_1(\Sigma_{353})A_{353}^k\hat{e}_{1,353}, \:\: k\geq 1
\end{align*}
with $\Sigma_{353}=\{u_{t}\}_{t=1}^{353}$, and only a matrix of parameters $A_{353}$ is left to be identified. The matrix of parameters will be identified using the sparse solver {\tt SpSolver.m} from the SDSI toolset based on algorithm \ref{alg:main_SLMESolver_alg_1} and the sparse solver {\tt SINDy.m}  based on SINDy sparse least squares solver algorithm introduced in \cite{BruntonSINDy}.

The sparsity patterns of the matrices of parameters identified by the Matlab programs {\tt SpSolver.m} and {\tt SINDy.m} are shown in figure \ref{fig:NSSPPatterns}.
\begin{figure}[h]
\centering
\includegraphics[scale=.6]{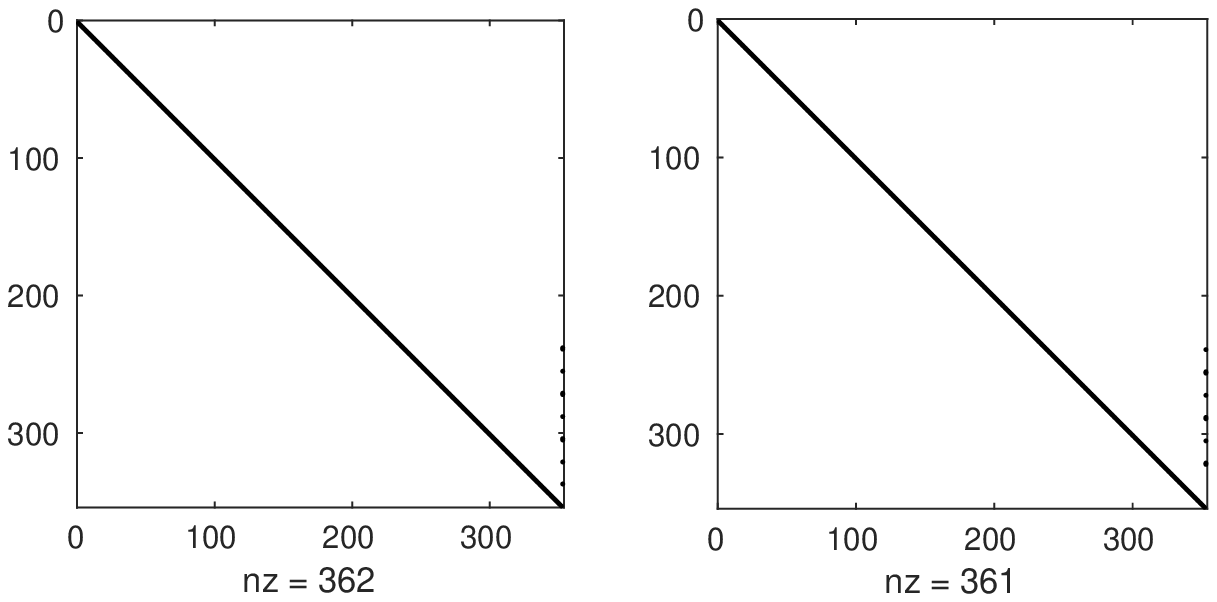}
\caption{Sparsity patterns of matrices of parametres identified by: SINDy solver (left) and SDSI solver (right).}
\label{fig:NSSPPatterns}
\end{figure}

The reference state $u_{502}$ and the corresponding states predicted by SINDy and SDSI algorithms are shown in figure \ref{fig:NSStates}.
\begin{figure}[h]
\centering
\includegraphics[scale=.3]{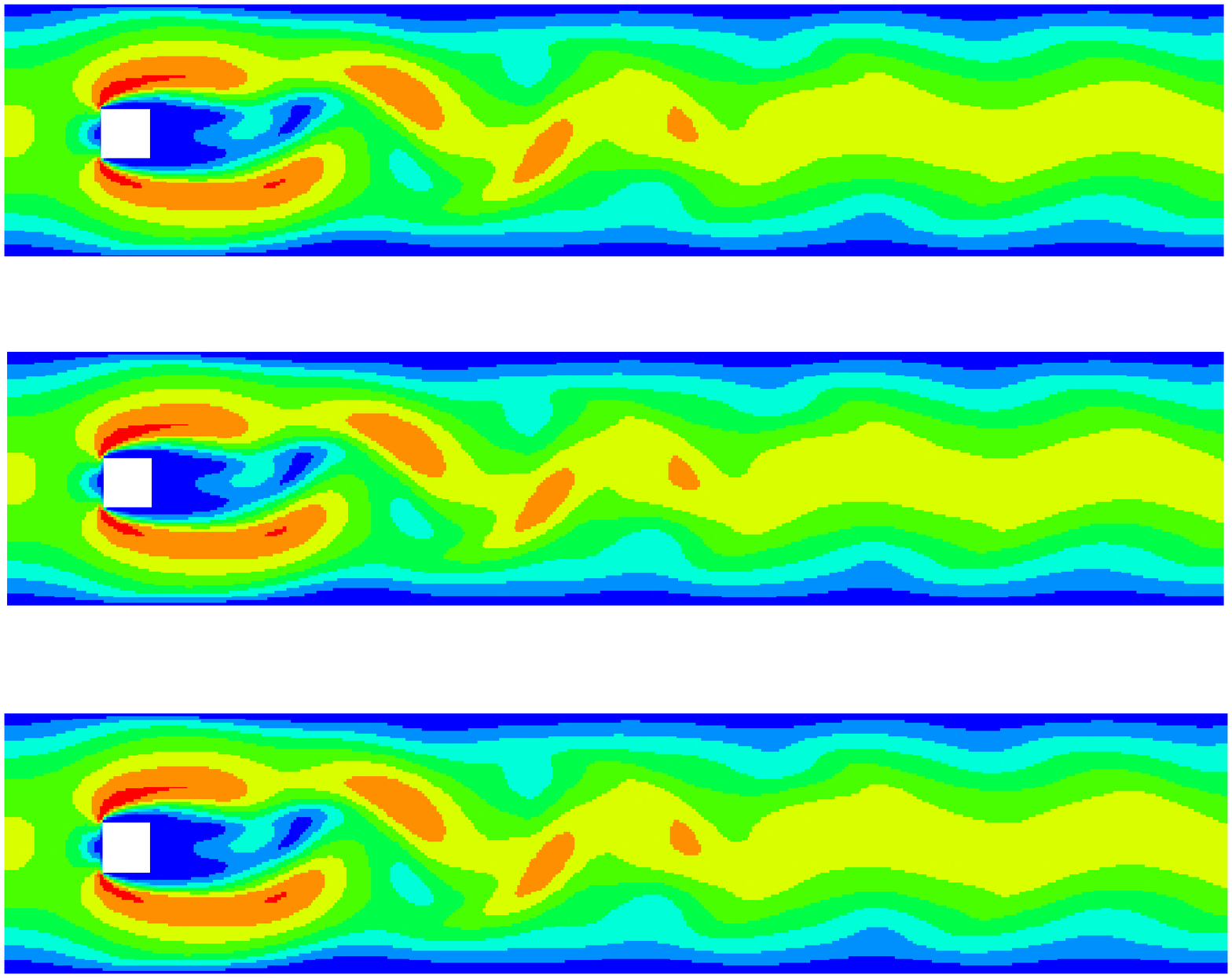}
\caption{Reference $u_{502}$ state (top). Predicted $\hat{u}_{502}$ state with SDSI (middle). Predicted $\hat{u}_{502}$ state with SINDy (bottom).}
\label{fig:NSStates}
\end{figure}

The prediction error estimates $\|\mathscr{H}_1(\Sigma_{502})-\mathscr{H}_1(\hat{\Sigma}_{502})\|_F$ for the predicted states $\hat{\Sigma}_{502}$ computed with each method are documented in table \ref{table:NSLTIErrors}, and the running times corresponding to the computation of the sparse representations of the matrices of parameters are documented in table \ref{table:NSLTIRTimes}.

\pagebreak

\begin{center}
 \begin{tabular}{||c | c||} 
 \hline
 Method & $\|\mathscr{H}_1(\Sigma_{502})-\mathscr{H}_1(\hat{\Sigma}_{502})\|_F$ \\ [0.5ex] 
 \hline\hline
 SDSI &  $0.0057$ \\ 
 \hline\hline 
 SINDy & $0.0072$\\
 \hline
 \end{tabular}
\captionof{table}{Prediction error estimates corresponding to each method.}
\label{table:NSLTIErrors}
\end{center}

\begin{center}
 \begin{tabular}{||c | c||} 
 \hline
 Method & Running Time (seconds)\\ [0.5ex] 
 \hline\hline
 SDSI &  $1.371872$ \\ 
 \hline\hline 
 SINDy & $6.017688$\\
 \hline
 \end{tabular}
\captionof{table}{Running times for the computation of the local linear model approximant.}
\label{table:NSLTIRTimes}
\end{center}

The computational setting used for this experiment is documented in the Matlab script \\
{\tt NSIdentifier.m} in \cite{CodeVides}. There are also Julia and Python versions of this script included in \cite{CodeVides}, but only the Matlab version was documented in this article as it is the fastest version of the algorithm \ref{alg:main_alg_1}, probably due in part to the outstanding performance of the least squares solver implemented in Matlab via the {\tt "$\backslash$"} operator.

\subsubsection{Sparse identification of a weather forecasting model}
For this example we will use a subcollection of the weather time series dataset recorded by the Max Planck Institute for Biogeochemistry, that was collected between 2009 and 2016 and prepared by François Chollet for \cite{DLBook}. From the column {\tt T (degC)} of the corresponding table, a subsample $\Sigma_{8412}\subset\mathbb{R}$ uniformly sampled in time with $8412$ elements has been extracted and recorded as {\tt TemperatureData.csv} for future references as part of \cite{CodeVides}. 

Using $50\%$ of the data in $\Sigma_{8412}$ we can compute three weather forecasting models of the form
\begin{align*}
T_{k+1}&=c_{0}+c_{1}T_{k}+c_{2}T_{k-1}+\cdots+c_{L}T_{k-L+1}\\
\approx& c_0+\mathscr{P}_LA_{T}^k\begin{bmatrix}
T_{1} \\ T_{2} \\ \vdots \\ T_{L}\end{bmatrix}.
\end{align*}

The first model will be computed using the function {\tt AutoReg} from the module {\tt statsmodels} from Python, the second model will be computed applying algorithm \ref{alg:main_alg_1} with the sparse solver {\tt SINDy.m}, and the third model will be computed applying algorithm \ref{alg:main_alg_1} with the sparse solver {\tt SpSolver.m}.

The reference signal, the predicted signal and the model parameters $c_{k}$ computed with {\tt AutoReg} are shown in figure \ref{fig:AutoRegTSModelData}.
\begin{figure}[h]
\centering
\includegraphics[scale=.45]{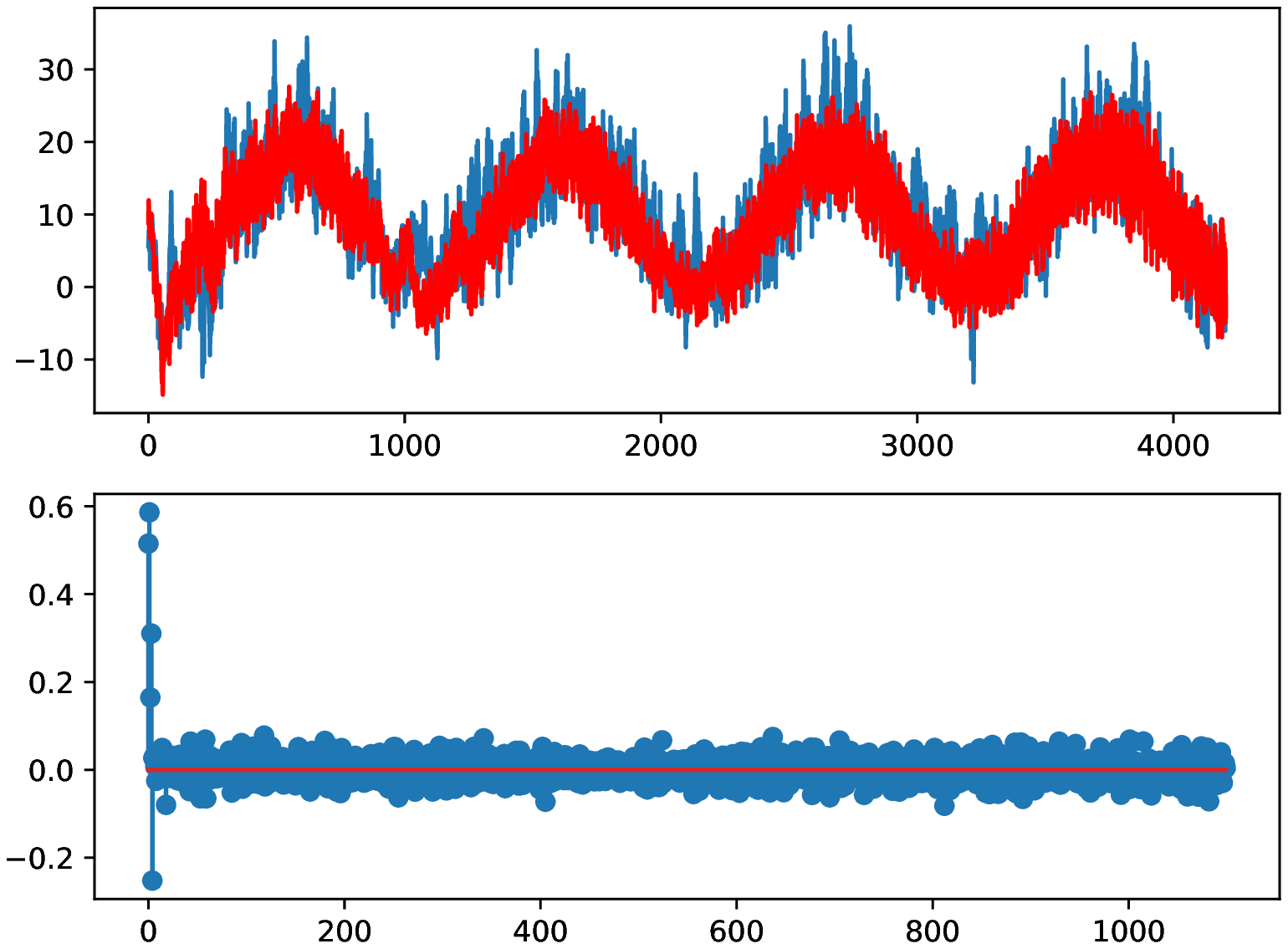}
\caption{Reference signal (blue line) and model prediction (red line) (top). Model parameters computed using {\tt AutoReg}. (bottom).}
\label{fig:AutoRegTSModelData}
\end{figure}
Since the model computed with {\tt AutoReg} has $1100$ nonzero coefficients, the recurrence relation corresponding to the model computed with {\tt AutoReg} will not be written explicitly. 

The reference signal, the predicted signal and the model parameters $c_{k}$ computed with algorithm \ref{alg:main_alg_1} using {\tt SINDy.m} are shown in figure \ref{fig:SINDyTSModelData}.
\begin{figure}[h]
\centering
\includegraphics[scale=.55]{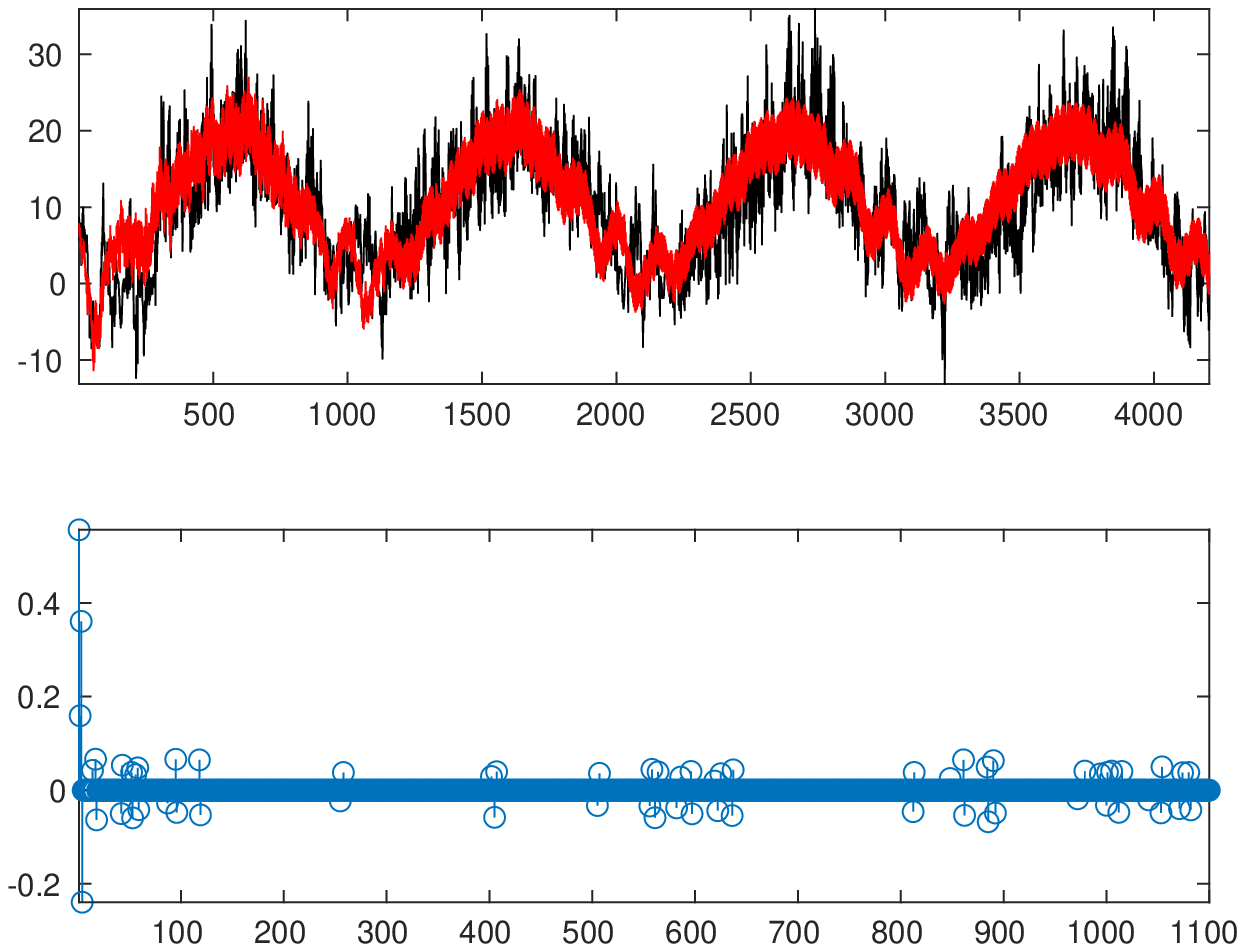}
\caption{Reference signal (black line) and model prediction (red line) (top). Model parameters computed using {\tt SINDy.m} (bottom).}
\label{fig:SINDyTSModelData}
\end{figure}

The reference signal, the predicted signal and the model parameters $c_{k}$ computed with algorithm \ref{alg:main_alg_1} using {\tt SpSolver.m} are shown in figure \ref{fig:SDSITSModelData}.
\begin{figure}[h]
\centering
\includegraphics[scale=.55]{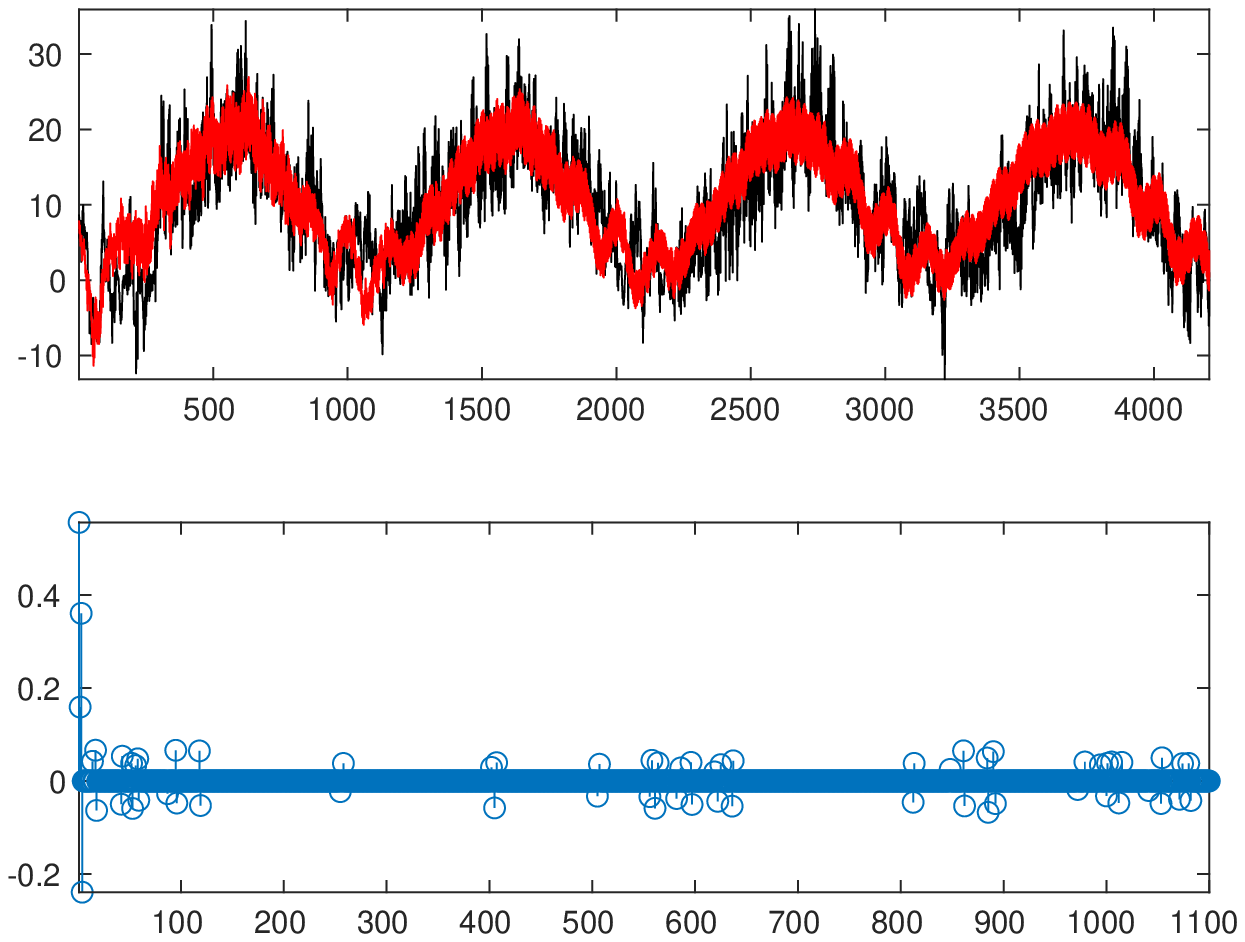}
\caption{Reference signal (black line) and model prediction (red line) (top). Model parameters computed using {\tt SpSolver.m} (bottom).}
\label{fig:SDSITSModelData}
\end{figure}

Although the models computed with {\tt SpSolver.m} and {\tt SINDy.m}, both have only 63 nonzero parameters $c_k$, we will not write the recurrence relations corresponding to these models explicitly either. 

The root mean square error estimates for each model are documented in table \ref{table:WeatherTSErrors}, and the running times corresponding to the computation of the model parameters are documented in table \ref{table:WeatherTSRTimes}.

\pagebreak

\begin{center}
 \begin{tabular}{||c | c||} 
 \hline
 Method & RMSE\\ [0.5ex] 
 \hline\hline
 AutoReg &  $6.222$ \\ 
 \hline\hline 
 SINDy & $5.8945$\\
  \hline\hline 
 SDSI &  $5.8945$ \\ 
 \hline
 \end{tabular}
\captionof{table}{Prediction error estimates corresponding to each method.}
\label{table:WeatherTSErrors}
\end{center}

\begin{center}
 \begin{tabular}{||c | c||} 
 \hline
 Method & Running Time (seconds)\\ [0.5ex] 
 \hline\hline
 AutoReg &  $1.0370969772338867$ \\ 
 \hline\hline
 SINDy &  $0.896653$ \\ 
 \hline\hline 
 SDSI & $0.562108$\\
 \hline
 \end{tabular}
\captionof{table}{Running times for the computation of the local linear model approximant.}
\label{table:WeatherTSRTimes}
\end{center}

The computational setting used for the experiments performed in this section is documented in the Matlab program {\tt SpTSPredictor.m} and the Python program {\tt TSModel.py} in \cite{CodeVides}, these programs can be used to replicate these experiments.

\section{Conclusion}

The results in \S\ref{section:linear-solvers} and \S\ref{section_SDSI} in the form of algorithms like the ones described in \S\ref{section_algorithms}, can be effectively used for the sparse identification of dynamical models that can be used to compute data-driven predictive numerical simulations. 

One of the main advantages of the low-rank approximation approach presented in this document, concerns the parameter estimation for linear and nonlinear models where numerical or measurement noises could affect the estimates significantly, an example of this phenomenon is documented as part of the numerical experiment \ref{exa:example-NLSE}.

\section{Future Directions}

As a consequence of theorem \ref{thm:main-thm}, it is possible to approach system identification problems as approximate matrix completion problems via low-rank matrix approximation, the corresponding connections with local low-rank matrix approximation in the sense of \cite{LLORMA} will be studied as part of the future directions of this research project.

As observed by Koch in \cite{DDTravelingWaves}, when applying sparse system identification techniques like the ones presented in this document, arriving at a high-quality model will be strongly related with the choice of functions in the method’s library, which may require expert knowledge or tuning, and this consideration aligns perfectly with the philosophy behind the development of the system identification technology presented as part of the results reported in this article.

Computational implementations of the the sparse solvers and identification algorithms presented in this document to the computation of sparse signal models and signal compressions, using rectangular sparse submatrices of wavelet matrices, will also be the subject of future communications.

The connections of the results in \S\ref{section_SDSI} to the solution of problems related to controllability and realizability of finite-state systems in classical and quantum information and automata theory in the sense of  \cite{finite_state_systems,finite_quantum_control_systems,finite_state_machine_approximation,BROCKETT20081}, will be further explored. 

Further applications of sparse signal model identification schemes to industrial automation and building information modeling (BMI) technologies, will be further explored.

\section*{Data Availability}

The programs and data that support the findings of this study are openly available in the SDSI repository, reference number \cite{CodeVides}.

\section*{Conflicts of Interest}
The author declares that he has no conflicts of interest.

\section*{Acknowledgment}

The structure preserving matrix computations needed to implement the algorithms in \S\ref{section_algorithms}, were performed with  {\sc Matlab}  R2021a (9.10.0.1602886) 64-bit (glnxa64), Python 3.8.5, Julia 1.6.0 and Netgen/NGSolve 6.2, with the support and computational resources of the Scientific Computing Innovation Center ({\bf CICC-UNAH}) of the National Autonomous University of Honduras.

I am grateful with Terry Loring, Stan Steinberg, Concepci\'on Ferrufino, Marc Rieffel and Moody Chu for interesting conversations, that have been very helpful for the preparation of this document.

\bibliographystyle{plain}
\bibliography{SPSystemIDFVides}

\end{document}